\newtheorem{thm}{Theorem}[section]
\newtheorem{mainthm}{Theorem}
\newtheorem{prop}[thm]{Proposition}
\newtheorem{lma}[thm]{Lemma}
\newtheorem{coro}[thm]{Corollary}
\theoremstyle{definition}
\newtheorem{dfn}[thm]{Definition}
\newtheorem{rmk}[thm]{Remark}
\newtheorem{ex}[thm]{Example}
\newcommand{\Z}{\mathds{Z}}
\newcommand{\V}{\VV}
\newcommand{\R}{\mathds{R}}
\newcommand{\C}{\mathds{C}}
\newcommand{\Q}{\mathds{Q}}
\newcommand{\Pj}{\PP}
\newcommand{\A}{\mathds{A}}
\newcommand{\VV}{{\mathds V}}
\newcommand{\ff}{\mathbf{f}}
\newcommand{\calodim}{\operatorname{codim}}
\newcommand{\Pic}{\operatorname{Pic}}
\newcommand{\Cl}{\operatorname{Cl}}
\newcommand{\ZZ}{{\mathds Z}}
\newcommand{\CC}{\mathds C}
\newcommand{\PP}{\mathds P}
\newcommand{\cald}{{\mathcal D}}
\newcommand{\cale}{{\mathcal E}}
\newcommand{\calf}{{\mathcal F}}
\newcommand{\calg}{{\mathcal G}}
\newcommand{\cali}{{\mathcal I}}
\newcommand{\caln}{{\mathcal N}}
\newcommand{\calo}{{\mathcal O}}
\newcommand{\calt}{{\mathcal T}}
\newcommand{\calv}{{\mathcal V}}
\newcommand{\ox}{{\mathcal O}_X}
\newcommand{\p}[1]{{\mathds{P}^{#1}}}
\newcommand{\op}[1]{{\mathcal O}_{\PP^{#1}}}
\newcommand{\supp}{\operatorname{Supp}}
\newcommand{\inext}{{\mathcal E}{\it xt}}
\newcommand{\Hom}{\operatorname{Hom}}
\DeclareMathOperator{\caloker}{coker}
\DeclareMathOperator{\im}{Im}
\DeclareMathOperator{\Dv}{Div}
\DeclareMathOperator{\rk}{{rk}}
\newcommand{\lra}{\longrightarrow}
\newcommand{\diag}{\mathrm{diag}}
\subjclass{14F05; 14J60; 14M10; 32S65;  	14M25}
\keywords{Logarithmic sheaves, freeness, and local freeness. Complete intersections. Syzygy sheaves. Toric orbifolds.}
\title[Toric logarithmic vector field and foliations]{Logarithmic vector fields and \\ foliations on toric varieties}
\author{Daniele Faenzi}
\address{Daniele Faenzi, Institut de Mathématiques de Bourgogne, UMR 5584, CNRS \& Université de Bourgogne, F-21000 Dijon, France}
\email{daniele.faenzi@u-bourgogne.fr}
\author{Marcos Jardim}
\address{Marcos Jardim, Instituto de Matemática, Estatística e Computação Científica, Universidade Estadual de Campinas (UNICAMP), Rua Sérgio Buarque de Holanda 651,
13083-859, Campinas, SP, Brazil}
\email{jardim@unicamp.br}
\author{William D. Montoya}
\address{William D. Montoya, Instituto de Matemática, Estatística e Computação Científica, Universidade Estadual de Campinas (UNICAMP), Rua Sérgio Buarque de Holanda 651,
13083-859, Campinas, SP, Brazil}
\email{wmontoya@ime.unicamp.br}
\thanks{M. J. is partially supported by the CNPQ grant number 305601/2022-9, the FAPESP Thematic Project 2018/21391-1 and the FAPESP-ANR project 2021/04065-6.
D. F. partially supported by FanoHK ANR-20-CE40-0023, SupToPhAG/EIPHI ANR-17-EURE-0002, Bridges ANR-21-CE40-0017, JSPS S24043.
W. M. acknowledges support from FAPESP postdoctoral grants number 2019/23499-7 and 2023/01360-2. 
We all benefited from the CAPES/COFECUB Project number 88887.191919/2018 -- Ma 926/19.
}
\begin{document}

\begin{abstract}
We introduce a toric version of the sheaf of logarithmic vector fields along a divisor of a simplicial toric variety. The notion is also relevant for algebraically independent families of polynomials in the Cox ring. We provide a generalization of the Saito criterion for the freeness of the toric logarithmic sheaf. We explain the relationship between this sheaf and the usual sheaf of logarithmic vector fields and the connection with holomorphic foliations on toric varieties.
\end{abstract}

\maketitle

\sloppy

\section{Introduction}

The study of vector fields or derivations tangent to some reduced divisor $D$ in a complex variety $X$ is a classical object in algebra, geometry, and algebraic geometry that has been studied for decades. Over the affine space $\A^n$, writing $f$ for the equation of the divisor $D$, the logarithmic derivations form a finitely generated module over the polynomial ring $S=\C[x_1,\ldots,x_n]$, or equivalently a sheaf over $\calo_{\A^n}$, denoted by $\calt_{\A^n}\langle D \rangle$, which is identified with the sheaf of Jacobian syzygies, namely, $\calt_{\A^n}\langle D \rangle$ is the kernel of the gradient $\bar \nabla(f)$, seen as a map $\calo_{\A^n}^{\oplus n} \to \calo_{D}$. Indeed, logarithmic derivations $\theta$ are defined by the condition that $f$ divides $\theta(f)$, which is to say that $\theta(f)$ vanishes modulo the equation of $D$.

In the influential paper \cite{saito:logarithmic}, K. Saito observed that for certain divisors the set of logarithmic derivations admits a basis, meaning that they form a free module over the algebra of holomorphic or polynomial functions. Furthermore, Saito also provided a simple and effective criterion that characterizes such special divisors, thus initiating a rich new area of research within complex/algebraic geometry and commutative algebra. 

The main goal of this paper is to define and study a toric version of the sheaf of logarithmic vector fields, letting $f$ be a square-free element of the Cox ring $S$ of a simplicial toric variety $X$ defined by a fan $\Sigma$ and considering the gradient $\nabla(f)$ with respect to the variables $x_1,\ldots,x_r$ defining torus-invariant divisors $D_1,\ldots,D_r$ associated with the $r$ rays 
$\Sigma(1)$ of $\Sigma$.
Writing $\bar \nabla (f)$ for the gradient of $f$, taken modulo $f$, we get the (extended) toric sheaf of logarithmic vector fields
\[
\calt_\Sigma \langle D \rangle := \ker (\bar \nabla(f)), \qquad \mbox{with} \qquad \bar \nabla(f) : \bigoplus_{1 \le i \le r }\calo_X(D_i) \to \calo_D(D).
\]
Note that this gives back $\calt_\Sigma\langle D \rangle = \calt_{\A^n}\langle D \rangle$ if $X=\A^n$. More generally, this sheaf is an extension of the usual sheaf $\calt_X \langle D\rangle$ by $\calo_X^{\oplus \rho}$, where $\rho = r-n$ is the rank of the Néron--Severi group of $X$, namely (see Proposition 
\ref{T vs TSigma}), we have a canonical exact sequence
\begin{equation}
\label{extension}    
0 \lra \calo_X^{\oplus \rho} \to \calt_\Sigma\langle D \rangle \lra \calt_{X}\langle D \rangle \lra 0. 
\end{equation}

Our first main result is a toric version of Saito's criterion. We call \textit{free} a coherent sheaf $\calf$ which is a direct sum of reflexive sheaves of rank $1$, in which case the divisors associated with the dual of such sheaves are called the exponents of $\calf$. For instance, $\calt_\Sigma \langle D\rangle$ is easily seen to be free for hypercube arrangements, see Example \ref{hypercube}.
Our result in this sense is the following (see Theorem \ref{thm:saito} for a comprehensive statement).

\begin{mainthm} \label{mthm1}
Let $X=X_\Sigma$ be a simplicial toric variety with no torus factors, and let $D =\V(f)\subset X$ be a reduced divisor. Then $\calt_\Sigma \langle D\rangle$ is free if and only if there is a free sheaf $\calf$ and a map $\nu: \calf \to \bigoplus_{1 \le i \le r }\calo_X(D_i)$
such that $\det (\nu|\epsilon) = cf$, with $c\in \C^*$.
If, in addition, $H^0(\calo_{X}(-\kappa_i))=0$ for $1 \le i \le n$, then 
$\calt_X \langle D\rangle$ is isomorphic to $\calf$.
\end{mainthm}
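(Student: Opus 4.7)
My plan is to mimic Saito's classical argument, with the Euler map $\epsilon:\calo_X^{\oplus\rho}\to\bigoplus_{i=1}^r\calo_X(D_i)$ coming from the $\Cl(X)$-grading on the Cox ring playing a central role. Two preliminaries set the stage. First, $\epsilon$ factors through $\calt_\Sigma\langle D\rangle$: this is the Cox-ring avatar of Euler's identity; since $f$ is $\Cl(X)$-homogeneous, each Euler derivation applied to $f$ is a scalar multiple of $f$, so $\bar\nabla(f)\circ\epsilon=0$. Second, extracting determinants from the defining sequence $0\to\calt_\Sigma\langle D\rangle\to\bigoplus_i\calo_X(D_i)\to\mathrm{im}(\bar\nabla(f))\to 0$ yields $\det(\calt_\Sigma\langle D\rangle)=\calo_X(\sum_i D_i-D)$, which coincides with $\det(\mathcal{F}\oplus\calo_X^{\oplus\rho})$ precisely when $\det(\nu|\epsilon)$ is a section of $\calo_X(D)$ cut out by $D$.

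For the direction "Saito matrix implies freeness", I take $\nu$ to be logarithmic (a point guaranteed by the full formulation in Theorem \ref{thm:saito}), so that $(\nu|\epsilon)$ factors through $\calt_\Sigma\langle D\rangle$. The resulting morphism $(\nu|\epsilon):\mathcal{F}\oplus\calo_X^{\oplus\rho}\to\calt_\Sigma\langle D\rangle$ is between torsion-free sheaves of equal rank $r$ and is injective because its determinant $cf$ is nonzero. The preliminary $c_1$-matching shows the cokernel has trivial first Chern class and is therefore supported in codimension $\geq 2$. Since $\mathcal{F}\oplus\calo_X^{\oplus\rho}$ is free and $\calt_\Sigma\langle D\rangle$ is reflexive, two such sheaves agreeing outside a codimension-two locus and sitting compatibly inside $\bigoplus\calo_X(D_i)$ must coincide; hence $(\nu|\epsilon)$ is an isomorphism and $\calt_\Sigma\langle D\rangle$ is free.

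Conversely, given $\calt_\Sigma\langle D\rangle\cong\bigoplus_j\calo_X(-\mu_j)$, I examine the inclusion into $\bigoplus_i\calo_X(D_i)$ as an $r\times r$ matrix, identify $\rho$ of its columns with the Euler sub-inclusion, and set $\mathcal{F}$ and $\nu$ to be the remaining $n$ summands and columns respectively. The determinant is then a section of $\calo_X(D)$ whose vanishing locus is the support of the cokernel $\mathrm{im}(\bar\nabla(f))$, namely $D$ with multiplicity one; thus it equals $cf$ up to a nonzero scalar. For the final claim, the vanishing $H^0(\calo_X(-\kappa_i))=0$ for $1\le i\le n$ is equivalent to $\Hom(\calo_X^{\oplus\rho},\mathcal{F})=0$, so the Euler sub-inclusion $\calo_X^{\oplus\rho}\hookrightarrow\mathcal{F}\oplus\calo_X^{\oplus\rho}$ projects trivially to $\mathcal{F}$ and thus embeds as the second summand. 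Quotienting by this subsheaf in the extension (\ref{extension}) identifies $\calt_X\langle D\rangle$ with $\mathcal{F}$.

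I expect the main obstacle to lie in the converse direction: recognizing the Euler sub-inclusion as $\rho$ specific columns of the Saito matrix, since a priori it is not clear which summands of $\bigoplus_j\calo_X(-\mu_j)$ receive $\calo_X^{\oplus\rho}$. The cohomological hypothesis on the exponents controls precisely this ambiguity and pins down the complementary $\mathcal{F}$. A secondary technical delicacy is the codimension-two argument identifying the $(\nu|\epsilon)$-image with $\calt_\Sigma\langle D\rangle$; on a simplicial (hence $\mathbb{Q}$-factorial) toric variety the singular locus has codimension at least two, and reflexive sheaves there remain characterized by their restriction to the regular locus, which is the key input making the argument go through.
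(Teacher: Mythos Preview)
Your overall strategy matches the paper's: factor $(\nu|\epsilon)$ through $\calt_\Sigma\langle D\rangle$ and then invoke the principle that an injection of reflexive sheaves of equal rank with cokernel supported in codimension $\ge 2$ is an isomorphism (this is precisely Lemma~\ref{lem:iso1}). The genuine gap is the step ``the cokernel has trivial first Chern class and is therefore supported in codimension $\geq 2$.'' The divisorial part of the cokernel is an effective Weil divisor of class zero in $\Cl(X)$, but an effective divisor of class zero need not be empty when $X$ is not complete: already for $X=\A^n$ one has $\Cl(X)=0$ and every hypersurface has class zero. The theorem is stated for arbitrary simplicial toric varieties without torus factors, so this case is live; correspondingly, the alternative hypothesis ``$c_1(\calt)-c_1(\calf)$ not effective'' in Lemma~\ref{lem:iso1} is unavailable here, since zero \emph{is} effective.

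The paper fills this gap by a local argument. Writing $M=(\nu|\epsilon)$ for the map to $\bigoplus_i\calo_X(D_i)$ and $\calg$ for the cokernel of your map $\theta\colon\calf\oplus\calo_X^{\oplus\rho}\to\calt_\Sigma\langle D\rangle$, the snake lemma yields $0\to\calg\to\caloker(M)\to\cali_{Z/D}(\beta)\to 0$, so $\supp(\calg)\subset D$ and one must exclude that $\calg$ contains an entire irreducible component $D_0$ of $D$. Restricting to the smooth locus of $X$ away from the other components, $\caloker(M)$ has projective dimension one and, because $\det(M)=cf$ is squarefree, is torsion-free of rank one on $D_0$; the same holds for $\cali_{Z/D}(\beta)$; a surjection between torsion-free rank-one sheaves on an integral scheme is an isomorphism, so $\calg$ vanishes generically on $D_0$. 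This rank-one torsion-free step is the substantive content your $c_1$ shortcut bypasses, and it is exactly what makes the classical Saito criterion work on $\A^n$.

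For the converse and for part~\ref{saito-iii} your outline is close to the paper's. The paper sidesteps the ``which columns are Euler'' issue not via the vanishing hypothesis (which is reserved for \ref{saito-iii}) but via Proposition~\ref{T0}: one splits off a single explicit Euler derivation as $\calt_\Sigma\langle D\rangle\simeq\calt_\Sigma\langle D\rangle_0\oplus\calo_X$ and reads the required syzygies from a free decomposition of $\calt_\Sigma\langle D\rangle_0$, so that the diagram already has $\theta$ an isomorphism and $\caloker(M)\simeq\cali_{Z/D}(\beta)$ forces $\det(M)=cf$.
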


Here, $\epsilon$ refers to the \textit{Euler matrix} which is also responsible for the first morphism in sequence \eqref{extension}, see Definition \ref{coefficient matrix} for a precise setting including the coefficient matrix.
This result allows us to prove the freeness of sheaves of logarithmic derivations of what we call toric braid arrangements, see Proposition \ref{prop:braid}.
Also, it recovers and generalizes some examples of free divisors in Hirzebruch surfaces first given by Di Gennaro and Malaspina, see Example \ref{ex:34}. In a different direction, Napame studied in \cite{Napame} when the logarithmic tangent sheaf associated with an equivariant divisor in a projective toric variety is slope-stable for some choice of polarization. Our Saito criterion affords some results in this spirit, see Corollary \ref{corol stable} and Example \ref{example stable}.
\medskip

Turning to more general sheaves of logarithmic derivations, the first two named authors and Vallès proposed in \cite{FaenziJardimValle} a generalization of the notion of logarithmic tangent sheaves for algebraically independent $k$-tuples of homogeneous polynomials $\ff=(f_1,\ldots,f_k)$ in $n+1$ variables, interpolating the sheaf 
$\calt_{\PP^n}\langle \VV(f_1,\ldots,f_k)\rangle$. To be precise, setting $d_i:=\deg(f_i)$ for $1 \le i \le k$, we regard the Jacobian matrix $\nabla({\ff})$ as a morphism of sheaves
$$ \calo_{\Pj^n}(1)^{\oplus n+1}
\xlongrightarrow{\nabla({\ff})}
\bigoplus_{i=1}^k \calo_{\Pj^n}(d_i).$$
Then the \textit{logarithmic tangent sheaf} associated with $\ff$ is defined as the kernel of $\nabla({\ff})$. It can be interpreted as the intersection of all the logarithmic tangent sheaves for each of the divisors $\VV(f_i)$, see \cite[Lemma 2.5]{FaenziJardimValle}. For $k>1$, Muniz noticed that $\ker\big(\nabla({\ff})\big)$ is the tangent sheaf of a foliation of codimension $k-1$ on $\PP^n$, see \cite[Appendix]{FaenziJardimValle}. 
In this direction, given a $k$-tuple of homogeneous elements $\ff=(f_1,\ldots,f_k)$ in the Cox ring $S$ of a simplicial toric variety, we define a morphism of abelian groups $\ZZ^{\oplus k} \to \Cl(X)$ by sending $(a_1,\dots,a_k)$ to $\sum_i a_i\deg(f_i)$; let $q$ be the rank of this morphism and assume that $q<k$. When $X$ is projective, the degree vector $\deg(\ff)\in\Cl(X)^{\oplus k}$ can be seen as a set of $k$ points in $\p {q-1}$ and we say that $\deg(\ff)$ has the Cayley--Bacharach property if these points satisfy Cayley--Bacharach with respect to the hyperplane divisor, i.e., no $k-1$ points are contained in a hyperplane. This holds, for instance, when $q=1$, hence a fortiori when $\rho = 1$.

Again, we get a toric sheaf of logarithmic derivations (see Definition \ref{defn:toric-several} below) that can be interpreted as the intersection of toric logarithmic sheaves defined by the divisors $\V(f_i)$, see Lemma \ref{lem:intersection}. Our main result regarding this sheaf (see Theorem \ref{dist} for a more precise statement) is the following.

\begin{mainthm} \label{mthm2}
Let $X=X_\Sigma$ be a smooth projective simplicial toric variety.
Let $\ff$ be a sequence of $k \ge 2$ pairwise coprime algebraically independent homogeneous polynomials. Assume that $\ff$ has degree rank $q$ with $k-n < q < k$ and that $\deg(\ff)$ satisfies the Cayley--Bacharach condition. Then $\ff$ induces a foliation $\cald_{\ff}$ of codimension $k-q$ on $X$ whose singular scheme contains $\V(\ff)$.
\end{mainthm}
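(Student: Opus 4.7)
The plan is to construct the foliation $\cald_\ff$ from the $k-q$ independent $\Cl(X)$-relations on the degrees of $\ff$ and then verify its codimension and the singular scheme containment separately. Since the degree map $\ZZ^{\oplus k}\to \Cl(X)$, $\vec{a}\mapsto \sum_i a_i\deg(f_i)$, has rank $q<k$, its kernel is a free abelian group of rank $k-q$; I fix a $\ZZ$-basis $\vec{a}^{(1)},\ldots,\vec{a}^{(k-q)}$. To each $\vec{a}^{(\alpha)}$ one associates the rational first integral $g_\alpha := \prod_{i} f_i^{a_i^{(\alpha)}}$, a degree-$0$ element of $\C(X)$, and the twisted polynomial $1$-form
\[
\omega^{(\alpha)} := \sum_{i=1}^k a_i^{(\alpha)}\Bigl(\prod_{j\ne i}f_j\Bigr)df_i,
\]
obtained by clearing denominators in $dg_\alpha/g_\alpha=\sum_i a_i^{(\alpha)}\,df_i/f_i$. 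The compatibility $\sum_i a_i^{(\alpha)}\deg(f_i)=0$ is precisely what allows each $\omega^{(\alpha)}$ to descend through the toric Euler sequence as prescribed by Definition \ref{defn:toric-several} and Lemma \ref{lem:intersection}, yielding a well-defined global section of $\Omega_X^1\otimes\calo_X\bigl(\sum_j\deg(f_j)\bigr)$. I declare $\cald_\ff$ to be the foliation defined, after saturation, by the Pfaff system $\omega^{(1)},\ldots,\omega^{(k-q)}$.

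For the codimension, the algebraic independence of $\ff$ together with the $k-q$ independent relations on $\deg(\ff)$ forces $dg_1\wedge\cdots\wedge dg_{k-q}\ne 0$, whence $\omega^{(1)}\wedge\cdots\wedge\omega^{(k-q)}$ is generically nonzero and $\cald_\ff$ has codimension exactly $k-q$; integrability is immediate since each $\omega^{(\alpha)}$ is, up to the global factor $\prod_j f_j$, the closed form $d\log g_\alpha$. For the singular scheme, every summand of $\omega^{(\alpha)}$ contains the factor $\prod_{j\ne i}f_j$, which vanishes on $\V(\ff)$ whenever $k\ge 2$; hence all $\omega^{(\alpha)}$, and therefore the Pfaff form $\omega^{(1)}\wedge\cdots\wedge\omega^{(k-q)}$ defining $\cald_\ff$, vanish identically on $\V(\ff)$, giving $\V(\ff)\subseteq\sing(\cald_\ff)$. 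The Cayley--Bacharach hypothesis intervenes to guarantee that every $\vec{a}^{(\alpha)}$ has all coordinates nonzero: otherwise some relation would ignore a particular $f_i$, the corresponding $\omega^{(\alpha)}$ would carry fewer $f_j$-factors, and the foliation would essentially be defined by a proper subtuple of $\ff$, potentially breaking the containment $\V(\ff)\subseteq\sing(\cald_\ff)$.

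The main technical obstacle is to reconcile the explicit construction above with the sheaf-theoretic picture from Definition \ref{defn:toric-several} and Lemma \ref{lem:intersection}, namely to identify the saturation of $\ker(\bar\nabla(\ff))$ modulo the Euler sub-bundle with the annihilator of the $\omega^{(\alpha)}$. This requires careful bookkeeping of the multi-graded twists in the toric Euler sequence with several polynomials of different $\Cl(X)$-degrees, and it is at this point that the pairwise coprimality of the $f_i$ is used, to rule out common factors among the $\omega^{(\alpha)}$ that would otherwise spuriously lower the apparent codimension.
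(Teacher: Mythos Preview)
Your approach via logarithmic $1$-forms is the natural dual of the paper's tangent-sheaf construction and, carried out correctly, would yield the same foliation --- indeed, your rational first integrals $g_\alpha$ are exactly the coordinates of the paper's rational map $X\dashrightarrow T_k/T_q$, so integrability comes for free in both pictures. However, your treatment of saturation and of the Cayley--Bacharach hypothesis contains a real gap.

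The problem is that for $k-q\ge 2$ the wedge $\omega^{(1)}\wedge\cdots\wedge\omega^{(k-q)}$ is \emph{always} divisible by $\bigl(\prod_j f_j\bigr)^{k-q-1}$, irrespective of which basis of relations you pick: a direct expansion gives
\[
\omega^{(1)}\wedge\cdots\wedge\omega^{(k-q)}=\Bigl(\prod_j f_j\Bigr)^{k-q-1}\eta,\qquad \eta=\sum_{|I|=k-q}\det(B_I)\Bigl(\prod_{j\notin I}f_j\Bigr)df_I,
\]
with $B=(a_i^{(\alpha)})$ the $k\times(k-q)$ matrix of relation vectors. So saturation is never trivial in codimension $\ge 2$, and your singular-scheme argument (``each $\omega^{(\alpha)}$ vanishes on $\V(\ff)$, hence so does their wedge'') only shows that the \emph{unsaturated} form vanishes there, which says nothing about $\sing(\cald_\ff)$. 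Your reading of Cayley--Bacharach is also off: CB does not force every basis relation to have all coordinates nonzero (this already fails whenever $k-q\ge 2$, since any $q+1$ of the $\beta_i$ satisfy a relation), and even such a basis would not eliminate the divisorial factor above. The correct role of CB in your approach is to guarantee that $\eta$ has no \emph{further} divisorial factor: via the minor duality $\det(B_I)=\pm c\,\det(A_{I^c})$ between $B$ and the $k\times q$ degree matrix $A$, one has $f_m\mid\eta$ exactly when every $q$-minor of $A$ avoiding row $m$ vanishes, i.e.\ when $\{\beta_i:i\ne m\}$ fails to span --- the negation of CB. Once this is in place, $\eta$ is the saturated $(k-q)$-form defining $\cald_\ff$, and since each coefficient of $\eta$ carries $q\ge 1$ factors $f_j$, it still vanishes on $\V(\ff)$, giving the containment. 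This is precisely the dual of the paper's argument that the maximal minors of $\diag(\ff)A$ share no common factor, which is how the paper shows the normal sheaf $\caln_\ff$ is torsion-free.
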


The paper is organized as follows. Section \ref{sec:pre} is dedicated to a brief review of simplicial toric varieties, the existence of a generalized Euler sequence, and the Euler formula. In Section \ref{sec:sheaves} we define the main characters of our work, namely the (extended) toric logarithmic sheaves on a toric variety for one polynomial. Section \ref{sec:saito} is mainly devoted to the notion of freeness for divisors on toric varieties and the proof of Theorem \ref{mthm1}. We then work out three examples, addressing cones on weighted projective space, toric braid arrangement, and invariant divisors. Section \ref{sec:dist} is about the case of several polynomials and their relationship with holomorphic distributions, leading up to the proof of Theorem \ref{mthm2}.

\smallskip

\noindent \textbf{Acknowledgements.} We thank Alan Muniz, Jean Vallès, and Maurício Corrêa for useful comments. Special thanks to Achim Napame for an important correction to a previous draft of this work.

\section{Preliminaries and notation} \label{sec:pre}

Let us recall some basic material on toric varieties. In this paper, the word \textit{variety} refers to a normal integral separated scheme of finite type over $\C$.

\subsection{Toric varieties}

A \textit{toric variety} is a variety $X$ containing a torus $T\simeq (\C^{*})^n$ as a Zariski open subset (thus $\dim(X)=n$) such that the action of $T$ on itself extends to an action $T\times X \to X$  of $T$ on $X$. 

\subsubsection{Cones and fans} Let us summarize some basic definitions and properties of cones and fans related to toric varieties.

\begin{dfn}
Let $M$ be a free abelian group of rank $n$. Let
$$N = \Hom_{\Z}(M, \Z), \qquad N_{\R} = N \otimes_{\Z} \R.$$
 \begin{enumerate}[label=\roman*)]
     \item A convex subset $\sigma \subset N_{R}$ is a rational $s$-dimensional  cone if there exist over $\R$,  $s$-elements
 $e_1,\dots, e_s \in N$ such that $$\sigma = \{\mu_1e_1 +\cdots+\mu_se_s \mid (\mu_1,\dots,\mu_s) \in \R_+^s\}.$$
     \item For any $i \in \{1,\ldots,s\}$, the generator $e_i$ is integral if for any non-negative rational number $l$ the product $l. e_i$ is in $N$ only if $l$ is an integer.
     \item Given two rational  cones $\sigma$, $\sigma'$ one says that $\sigma' $ is a face of $\sigma$ ($\sigma'< \sigma$)
     if the set of integral generators of $\sigma'$ is a subset of the set of integral generators of $\sigma$.
     \item A cone $\sigma$ is strongly convex if $\{0\}$ is a  face of $\sigma$.
     \item  A finite set $\Sigma = \{\sigma_1,\dots, \sigma_t\}$ of strongly convex rational  cones is called a fan if:
\begin{itemize}
    \item all faces of cones in $\Sigma$ are in $\Sigma$;
    \item if $\sigma,\sigma'\in \Sigma$ then $\sigma\cap \sigma'< \sigma$ and $\sigma\cap \sigma' <\sigma'$.
\end{itemize}     
\end{enumerate}
\end{dfn}

A fan $\Sigma\subset N_{\R}$ defines a \textit{toric variety} $X_\Sigma$ with torus $T_N=N\otimes_{\Z} \C^{*}$. By \cite[Corollary 3.1.8]{CoxLittleSchenck} if $X$ is a toric variety containing the torus $T_N$ as an affine open subset, then there exists a fan $\Sigma\subset N_{\R}$ such that $X\simeq X_\Sigma$.

We denote by $\Sigma(i)$ the $i$-dimensional cones of $\Sigma$. We call $\Sigma(1)$ the set of rays of $\Sigma$. Each $\varrho\in \Sigma(1)$ corresponds to an irreducible $T$-invariant Weil divisor $D_{\varrho}$ on $X_\Sigma$. Any Weil divisor $D$ is linearly equivalent to $\sum_{\varrho\in\Sigma(1)}a_{\varrho} D_{\varrho}$. We have an isomorphism:
\[
\Sigma(1) \simeq \Z^{r_\Sigma}, \qquad \mbox{where $r_\Sigma = \# \Sigma(1)$ is the \textit{toric rank} of $X_\Sigma$.}
\]

The divisors of the form $\sum_{\varrho\in \Sigma(1)}u_{\varrho}D_{\varrho} $ are precisely those divisors which are invariant under the torus action on $X_\Sigma$:
$$ \Dv_{T_N}(X_\Sigma)=\bigoplus_{\varrho \in \Sigma(1)} \Z D_{\varrho} \subset \Dv(X_\Sigma).$$
Here, $\Dv_{T_N}(X_\Sigma)$ is the group of $T_N$-invariant Weil divisors on $X_\Sigma$.

\subsubsection{The Cox ring} The \textit{Cox ring} of $X_\Sigma$ is the polynomial ring 
\[S=\C[x_{\varrho}\mid \varrho\in\Sigma(1)].\]

We refer to \cite{arzhantsev_derenthal_hausen_laface_2014} for an exhaustive study of this ring. Let us only mention here that the ring $S$ has a $\Cl(X_\Sigma)$-grading, which is described as follows. A monomial $x^a:=\prod_{\varrho\in\Sigma(1)}x_{\varrho}^{a_{\varrho}}\in S   $ is associated to the Weil divisor $D=\sum_{\varrho \in \Sigma(1)} a_\varrho D_\varrho$. Then $$\deg(x^a)=[D]\in \Cl(X_\Sigma).$$

\begin{rmk}
Writing $\rho=\rho(X_\Sigma)$ for the Picard rank of $X_\Sigma$, \cite[Theorem 4.2.1]{CoxLittleSchenck} gives
\[
\rho(X_\Sigma)=r_\Sigma-\dim(X_\Sigma).
\]
\end{rmk}

\subsubsection{Torus factors}
Let us recall here what we mean by a toric variety $X$ having a torus factor and the implications of this notion to the set of divisors on $X$.

\begin{dfn}
A toric variety $X$ has a \textit{torus factor} if it is equivariantly isomorphic to the product of a nontrivial torus and a toric variety of smaller dimension.
\end{dfn}

\begin{thm}[See {\cite[Corollary 3.3.10]{CoxLittleSchenck}}] The following are equivalent:
\begin{enumerate}[label=\roman*)]
    \item $X_\Sigma$ has no torus factors.
    \item Every morphisms $X_\Sigma\to \C^*$ is constant, i.e., $\Gamma(X_\Sigma,\calo_{X_\Sigma})^*=\C^*.$
    \item The minimal generators $u_{\varrho}$ of $\varrho \in \Sigma(1)$ span $N_{\R}$.
\end{enumerate}
\end{thm}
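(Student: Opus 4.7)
The plan is to prove the equivalence by establishing the cycle (i) $\Rightarrow$ (iii) $\Rightarrow$ (ii) $\Rightarrow$ (i), with the key computation being the explicit description of the group of invertible global regular functions on $X_\Sigma$ via characters of the torus $T_N$.

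First, to see (ii) $\Leftrightarrow$ (iii), I would compute $\Gamma(X_\Sigma, \calo_{X_\Sigma})^*$ directly. Since $X_\Sigma$ is covered by affine charts $U_\sigma = \operatorname{Spec}\, \C[\sigma^\vee \cap M]$ and $T_N \subset X_\Sigma$ is dense, any regular function on $X_\Sigma$ restricts to a Laurent polynomial $\sum_m c_m \chi^m$ on $T_N$, and the character $\chi^m$ extends regularly to $X_\Sigma$ precisely when $\langle m, u_\varrho\rangle \geq 0$ for every ray $u_\varrho \in \Sigma(1)$. For an invertible global function, the same condition must hold for $-m$, so $m$ lies in $\{u_\varrho : \varrho \in \Sigma(1)\}^{\perp} \cap M$. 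Hence $\Gamma(X_\Sigma, \calo_{X_\Sigma})^* / \C^*$ is isomorphic to this orthogonal complement, which is trivial if and only if the $u_\varrho$ span $N_\R$.

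For (ii) $\Rightarrow$ (i), I would argue by contrapositive: if $X_\Sigma$ has a torus factor, say $X_\Sigma \simeq (\C^*)^k \times Y$ equivariantly with $k \ge 1$, then projecting onto any $\C^*$ coordinate yields a non-constant morphism $X_\Sigma \to \C^*$, contradicting (ii). For (i) $\Rightarrow$ (iii), again by contrapositive: assume the rays span a proper subspace $V \subset N_\R$ of dimension $m < n$. Let $N'' := \operatorname{span}_\R(\{u_\varrho\}) \cap N$, a saturated sublattice of rank $m$, and choose any complementary sublattice $N'$ so that $N = N'' \oplus N'$. Since every cone of $\Sigma$ is generated by rays lying in $N''_\R$, the fan $\Sigma$ sits entirely in $N''_\R$ and, viewed there, defines a toric variety $Y = X_{\Sigma,N''}$. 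The natural identification $N_\R = N''_\R \oplus N'_\R$ then exhibits $X_\Sigma \simeq Y \times T_{N'}$, an equivariant product with the nontrivial torus $T_{N'} \simeq (\C^*)^{n-m}$, so $X_\Sigma$ has a torus factor.

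The main obstacle I anticipate is the (i) $\Rightarrow$ (iii) step, where one must produce an honest equivariant splitting $X_\Sigma \simeq Y \times T_{N'}$ from the mere fact that the rays are degenerate. The delicate point is ensuring that the lattice splitting $N = N'' \oplus N'$ can be chosen compatibly with $\Sigma$, which requires taking $N''$ to be the \emph{saturation} of the lattice spanned by the $u_\varrho$ so that the fan actually decomposes integrally, not just rationally. Once this lattice-level decomposition is in place, the product structure on the toric variety follows formally from the product structure of the fan.
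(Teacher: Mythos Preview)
The paper does not give its own proof of this statement; it is quoted verbatim as \cite[Corollary 3.3.10]{CoxLittleSchenck} and used as background. Your proposed argument is correct and is essentially the standard proof found in that reference: the computation of $\Gamma(X_\Sigma,\calo_{X_\Sigma})^*$ via characters orthogonal to the rays gives (ii)$\Leftrightarrow$(iii), and the lattice splitting $N=N''\oplus N'$ with $N''$ the saturation of the sublattice generated by the $u_\varrho$ yields the equivariant product decomposition needed for (i)$\Leftrightarrow$(iii). The only point you leave implicit is that an invertible Laurent polynomial on $T_N$ is automatically a nonzero scalar times a single character $\chi^m$; this is elementary (units in $\C[M]$ are exactly $\C^*\cdot M$) but worth stating, since without it the passage from ``$f$ invertible'' to ``both $m$ and $-m$ satisfy the inequalities'' is not justified.
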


\begin{thm}[{\cite[Theorem 4.1.3]{CoxLittleSchenck}}]\label{gen}
One has the exact sequence
$$M\lra \Dv_{T_N}(X_\Sigma) \stackrel{\deg}{\lra} \Cl(X_\Sigma)\lra 0 . $$
Moreover, one has a short exact sequence
$$0\lra M\lra \Dv_{T_N}(X_\Sigma)\lra \Cl(X_\Sigma)\lra 0 . $$
if and only if $X_\Sigma$ has no torus factors.
\end{thm}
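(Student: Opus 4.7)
The plan is threefold: define the map $M \to \Dv_{T_N}(X_\Sigma)$ via characters, establish surjectivity of $\deg$ by invoking the vanishing of the class group of the torus, and then identify the kernel of the first map with the failure of the minimal ray generators to span $N_\R$.

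To each $m \in M$ I attach the character $\chi^m \colon T_N \to \C^*$, which extends to a nonzero rational function on $X_\Sigma$, and I set $m \mapsto \divi(\chi^m)$. A local computation on each affine toric chart $U_\sigma$ yields
\[
\divi(\chi^m) \;=\; \sum_{\varrho \in \Sigma(1)} \langle m, u_\varrho \rangle \, D_\varrho,
\]
where $u_\varrho$ denotes the primitive generator of the ray $\varrho$. This is $T_N$-invariant and principal, so the composition with $\deg$ vanishes.

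For the surjectivity of $\deg$, I would use that the torus $T_N \subset X_\Sigma$ is the complement of $\bigcup_\varrho D_\varrho$ and, being a torus, has trivial class group; the standard excision sequence for Weil divisors then shows that every divisor class on $X_\Sigma$ is represented by a $\Z$-combination of the $D_\varrho$, giving a surjection $\Dv_{T_N}(X_\Sigma) \onto \Cl(X_\Sigma)$. For exactness in the middle I must show that every rational function $f$ on $X_\Sigma$ with $T_N$-invariant divisor is a scalar multiple of some $\chi^m$: the idea is to restrict $f$ to $T_N$, where its invertibility forces it into $\C^* \cdot \chi^M$, and then check that the resulting $m$ is independent of the chart. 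Finally, the kernel of $m \mapsto \divi(\chi^m)$ is exactly $\{m \in M : \langle m, u_\varrho\rangle = 0 \text{ for every } \varrho \in \Sigma(1)\}$, which vanishes if and only if the $u_\varrho$ span $N_\R$, equivalently (by the theorem immediately preceding the statement) if and only if $X_\Sigma$ has no torus factors. Combining the three steps yields the four-term exactness unconditionally, and the short exact sequence under the no-torus-factor hypothesis.

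The step I expect to be the main obstacle is the exactness in the middle: one must argue that a $T_N$-invariant principal divisor is actually the divisor of a character, not merely of a rational function whose divisor happens to be torus-invariant. The cleanest route exploits that the regular invertible functions on an affine toric chart $U_\sigma = \mathrm{Spec}\,\C[\sigma^\vee \cap M]$ lie in $\C^* \cdot \chi^M$, so that $f$ must restrict on each $U_\sigma$ to a scalar multiple of a character; one then verifies that the characters and scalars are compatible across the fan, producing a single $m \in M$ and $c \in \C^*$ with $f = c\,\chi^m$.
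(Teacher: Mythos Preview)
The paper does not supply its own proof of this statement: it is quoted verbatim from \cite[Theorem 4.1.3]{CoxLittleSchenck} as background material, with no argument given. So there is nothing in the paper to compare your proposal against.

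That said, your outline is correct and is precisely the standard proof one finds in Cox--Little--Schenck or Fulton. The three ingredients---the explicit formula $\divi(\chi^m)=\sum_\varrho \langle m,u_\varrho\rangle D_\varrho$, excision along $T_N$ together with $\Cl(T_N)=0$ for surjectivity, and the identification of the kernel with $\{m:\langle m,u_\varrho\rangle=0\ \forall\varrho\}$---are exactly right. Your anticipated difficulty (exactness in the middle) is the genuine one, and the fix you propose, namely that the unit group of $\C[\sigma^\vee\cap M]$ is $\C^*\cdot\chi^{\sigma^\perp\cap M}$, is the correct mechanism; gluing across charts is unproblematic since the characters agree on overlaps up to a scalar, and connectedness forces a single $m$. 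Nothing is missing.
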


\subsubsection{Simplicial toric varieties}

Let us recall the notion of simplicial toric variety and its connection to orbifold singularities.

\begin{dfn}
A strongly convex rational polyhedral cone $\sigma\subset N_{\R}$ is simplicial if its minimal generators are linearly independent over $\R$ and we say that a fan $\Sigma$ is simplicial if every cone $\sigma$ in $\Sigma$ is simplicial.
\end{dfn}

\begin{thm}[Theorem 3.1.19 in \cite{CoxLittleSchenck}]
A toric variety $X_\Sigma$ is an orbifold, i.e., $X_\Sigma$  has only finite quotient singularities if and only if $\Sigma$ is simplicial.
\end{thm}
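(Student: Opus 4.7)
The property of having only finite quotient singularities is local on $X_\Sigma$, so it suffices to establish the equivalence on each affine open chart $U_\sigma$ corresponding to a cone $\sigma \in \Sigma$. I would therefore fix such a $\sigma$ and prove: $U_\sigma$ has only finite quotient singularities if and only if $\sigma$ is simplicial.

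For the direction ``$\sigma$ simplicial $\Rightarrow U_\sigma$ is an orbifold'', let $u_1, \ldots, u_s$ be the primitive generators of the rays of $\sigma$; by hypothesis they are linearly independent over $\R$. I would complete them to a $\Q$-basis $u_1, \ldots, u_s, v_1, \ldots, v_{n-s}$ of $N_\Q$ and set $N' = \Z u_1 \oplus \cdots \oplus \Z u_s \oplus \Z v_1 \oplus \cdots \oplus \Z v_{n-s}$. Then $N' \subseteq N$ has finite index, and with respect to $N'$ the cone $\sigma$ is smooth (its generators form part of a $\Z$-basis), so the associated toric variety $U_{\sigma, N'}$ is isomorphic to $\C^s \times (\C^*)^{n-s}$. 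The inclusion $N' \hookrightarrow N$ induces a finite toric morphism $U_{\sigma, N'} \to U_\sigma$ that realizes the target as the quotient of $U_{\sigma, N'}$ by the finite abelian group $G = \Hom_{\Z}(N/N', \C^*)$; hence $U_\sigma$ has only finite quotient singularities.

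For the converse, assume $U_\sigma$ is an orbifold. A standard averaging argument over any local presentation as a finite quotient of a smooth variety shows that orbifolds are $\Q$-factorial: every Weil divisor admits a positive integer multiple that is Cartier. To translate this into a condition on $\sigma$, I would invoke the toric dictionary (a local refinement of Theorem \ref{gen}): a torus-invariant divisor $D = \sum_\varrho a_\varrho D_\varrho$ on $U_\sigma$ is $\Q$-Cartier precisely when there exists $m \in M_\Q$ with $\langle m, u_\varrho\rangle = -a_\varrho$ for every ray $\varrho$ of $\sigma$. If $\sigma$ had strictly more rays than its dimension, this linear system would be overdetermined, and generic coefficients $a_\varrho$ would produce a torus-invariant Weil divisor failing to be $\Q$-Cartier, contradicting $\Q$-factoriality. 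Therefore the ray generators of $\sigma$ must be linearly independent over $\R$, i.e., $\sigma$ is simplicial.

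The main obstacle I expect is the converse, specifically the passage from ``orbifold'' to ``$\Q$-factorial''. Making the averaging rigorous requires some care about $G$-equivariant line bundles and Kempf-type descent, since a $G$-equivariant Cartier divisor on the smooth cover descends only after multiplication by the order of the stabilizers. Once one knows that some positive multiple of any Weil divisor is Cartier, the toric characterization of invariant Cartier data as linear functionals on $\sigma$ closes the argument cleanly, and the rest of the proof reduces to linear algebra on $N_\R$.
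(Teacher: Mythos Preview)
The paper does not give its own proof of this statement; it is quoted as Theorem 3.1.19 of Cox--Little--Schenck and used purely as background. There is therefore nothing in the paper to compare your argument against.

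On its own merits your outline is correct and standard. The forward direction via a finite-index sublattice $N' \subset N$ making $\sigma$ smooth is exactly the textbook argument. For the converse, your route through $\Q$-factoriality is valid; the step you flag as delicate (orbifold $\Rightarrow$ $\Q$-factorial) is the only real content, and it follows from the fact that the local class group of a finite quotient singularity is finite: pull a Weil divisor back to the smooth cover where it becomes Cartier, and observe that the $|G|$-th tensor power of the associated reflexive rank-one sheaf carries a canonical $G$-linearization and hence descends. Your final linear-algebra step is fine once you note that the evaluation map $M_\Q \to \Q^s$, $m \mapsto (\langle m, u_\varrho\rangle)_\varrho$, has image of dimension equal to $\dim \sigma$, so it fails to be surjective precisely when $\sigma$ has more rays than its dimension.
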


When $X_\Sigma$ is simplicial and without torus factors, it may be represented as 
$$X_\Sigma\simeq \C^r\setminus Z(\Sigma)/G $$
where $G=\Hom_{\Z}({\rm Cl}(\Sigma),\C^*)$ and $Z(\Sigma)=\VV(B(\Sigma))$ with $B(\Sigma)$ the irrelevant ideal, that is, 
$B(\Sigma):=\left(\prod_{\varrho \not \in \sigma(1)}x_{\varrho} \mid \sigma\in \Sigma\right)$. 

\begin{rmk}
A toric orbifold $X_\Sigma$ without torus factors has $\Cl(X_\Sigma)=0$ if and only if $X_\Sigma$ is the affine space. Indeed, $X_\Sigma$ is affine and smooth and by \cite[Example  1.2.21]{CoxLittleSchenck} we must have that $X_\Sigma$ is the product of the affine space and a torus. So, since  $X_\Sigma$ has no torus factors it must be the affine space. 
\end{rmk}

\begin{prop}[Proposition 4.2.7 in \cite{CoxLittleSchenck}]
For a given toric variety $X_\Sigma$, the following are equivalent:
\begin{enumerate}[label=\roman*)]
\item For every Weil divisor $D$ of $X_\Sigma$, there is an integer $m>0$ such that $mD$ is Cartier, namely, the variety $X_\Sigma$ is $\Q$-factorial.
\item The group $\Pic(X_\Sigma)$ has a finite index in $\Cl(X_\Sigma)$.
\item The fan $\Sigma$ is simplicial.
\end{enumerate}
\end{prop}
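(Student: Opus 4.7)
The plan is to treat the three conditions as two natural equivalences, namely (i) $\Leftrightarrow$ (ii), which is essentially formal once both groups are known to be finitely generated, and (i) $\Leftrightarrow$ (iii), which relies on the explicit toric criterion for Cartierness of $T_N$-invariant divisors. Throughout, we exploit the surjection $\Dv_{T_N}(X_\Sigma) \to \Cl(X_\Sigma)$ from Theorem \ref{gen} so that every Weil class admits a torus-invariant representative, reducing all questions to such divisors.

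For (i) $\Leftrightarrow$ (ii) I would observe that, by definition, $\Pic(X_\Sigma) \subseteq \Cl(X_\Sigma)$ is the subgroup of Cartier classes, so (i) is precisely the statement that the quotient $\Cl(X_\Sigma)/\Pic(X_\Sigma)$ is torsion. Since $\Cl(X_\Sigma)$ is a finitely generated abelian group (Theorem \ref{gen} presents it as the cokernel of $M \to \Dv_{T_N}(X_\Sigma)$, both free of finite rank), being torsion is equivalent to having finite order, which is condition (ii).

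For (iii) $\Rightarrow$ (i) the plan is to use the standard criterion: a torus-invariant Weil divisor $D = \sum_{\varrho \in \Sigma(1)} a_\varrho D_\varrho$ is Cartier if and only if for every maximal cone $\sigma \in \Sigma$ there exists $m_\sigma \in M$ with $\langle m_\sigma, u_\varrho \rangle = -a_\varrho$ for every $\varrho \in \sigma(1)$. When $\Sigma$ is simplicial, the generators $\{u_\varrho\}_{\varrho \in \sigma(1)}$ are $\R$-linearly independent, so the system admits a unique rational solution $m_\sigma \in M_\Q$. Clearing denominators on each of the finitely many maximal cones, then taking a common multiple, produces an integer $k > 0$ such that $kD$ is Cartier; hence $X_\Sigma$ is $\Q$-factorial. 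For (i) $\Rightarrow$ (iii) I would argue by contrapositive: if some cone $\sigma$ is not simplicial, then $\#\sigma(1) > \dim \sigma$, so the linear map $M_\Q \to \Q^{\sigma(1)}$, $m \mapsto (\langle m, u_\varrho \rangle)_\varrho$, fails to be surjective. Choosing integer values $(a_\varrho)_{\varrho \in \sigma(1)}$ outside its image and extending by zero on the remaining rays yields a torus-invariant divisor $D$ such that no positive multiple of $(a_\varrho)_\varrho$ lies in the image either, so no multiple $kD$ can satisfy the Cartier condition on $\sigma$, contradicting $\Q$-factoriality.

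The main obstacle is the integrality bookkeeping in the simplicial case: producing $m_\sigma$ over $\Q$ is straightforward from linear independence, but promoting it to $M$ after multiplying by $k$ requires controlling the index of the sublattice $\sum_{\varrho \in \sigma(1)} \Z u_\varrho$ inside $N \cap \mathrm{span}_\R(\sigma)$ uniformly across the finitely many maximal cones. Once this uniform index is absorbed into $k$, the argument closes; and, incidentally, the converse direction illustrates why mere rationality of the fan is not enough — one genuinely needs the linear-independence input provided by simpliciality to invert the dual-basis map.
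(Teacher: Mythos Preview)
The paper does not supply its own proof of this proposition: it is quoted verbatim as background from \cite[Proposition 4.2.7]{CoxLittleSchenck}, so there is no in-paper argument to compare against. Your proposal is correct and is essentially the standard textbook proof (the one in Cox--Little--Schenck itself): reduce to torus-invariant divisors via the surjection in Theorem~\ref{gen}, use finite generation of $\Cl(X_\Sigma)$ for (i)$\Leftrightarrow$(ii), and use the linear-algebra characterization of Cartier invariant divisors on each cone for (i)$\Leftrightarrow$(iii). The integrality bookkeeping you flag---controlling the index of $\sum_{\varrho\in\sigma(1)}\Z u_\varrho$ in $N\cap\mathrm{span}_\R(\sigma)$---is exactly the multiplicity $\mathrm{mult}(\sigma)$ that appears in the reference, and your handling of it is fine.
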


\subsection{Generalized Euler sequence in toric varieties} \label{generalized Euler}

Let us continue to use the notation introduced above, so $\Sigma$ is a simplicial fan, $X=X_\Sigma$ is the associated toric variety, $r=r_\Sigma=\# \Sigma(1)$ is the toric rank of $X$.

\begin{dfn}[Zariski 1-forms]
Let $j:U_0\hookrightarrow X$ the inclusion of the smooth locus of $X$. We define the sheaf of Zariski 1-forms as
$$\hat{\Omega}^1_{X}:=j_{*}\Omega^1_{U_0}. $$
\end{dfn}
In general $\hat{\Omega}^1_{X}$ may fail to be locally free but it is always reflexive.

\begin{thm}[{\cite[Theorem 12.1]{BatyrevCox}}]
Assume $X$ has no torus factors and $\Sigma$ is simplicial. Then there is an exact sequence 
$$ 0\lra \hat{\Omega}^1_{X} \lra \bigoplus_{i=1}^r\calo_{X}(-D_{i})\lra \Cl(X)\otimes_{\Z} \calo_{X}\lra 0   $$
and its dual sequence
$$0\lra \Cl(X)^\vee\otimes_{\Z} \calo_{X}\xrightarrow{\epsilon} \bigoplus_{i=1}^r\calo_{X}(D_{i})\lra \calt X\lra 0.$$
\end{thm}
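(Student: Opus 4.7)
The plan is to deduce both sequences from the Cox presentation $X\simeq\tilde X/G$ with $\tilde X=\C^r\setminus Z(\Sigma)$ and $G=\Hom_\Z(\Cl(X),\C^*)$, which exists for simplicial $X$ without torus factors as recalled above. I would begin with the relative tangent sequence for the good geometric quotient $\pi:\tilde X\to X$:
\begin{equation*}
0 \lra \calt_{\tilde X/X} \lra \calt_{\tilde X} \lra \pi^*\calt_X \lra 0.
\end{equation*}
On $\tilde X\subset\C^r$ the tangent sheaf is trivially free with basis $\partial_{x_1},\dots,\partial_{x_r}$, and each $\partial_{x_i}$ is a $G$-semi-invariant whose weight is opposite to that of $x_i$. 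Since $G$ is diagonalizable with character lattice $\Cl(X)$, the relative tangent sheaf $\calt_{\tilde X/X}$ is canonically $G$-equivariantly trivial of rank $\rho$, isomorphic to $\Cl(X)^\vee\otimes_\Z\calo_{\tilde X}$; explicitly, an element $v\in\Cl(X)^\vee\subset\Z^r$ corresponds to the infinitesimal generator $\sum_i v_i x_i \partial_{x_i}$ of the associated one-parameter subgroup of $G$.

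The next step is to apply the descent functor $\pi_*(-)^G$ for the good geometric quotient $\pi$, which sends a trivial $G$-equivariant line bundle of character $\chi\in\Cl(X)$ to $\calo_X(\chi)$. Tracing the weights, $\calt_{\tilde X}$ descends to $\bigoplus_{i=1}^r\calo_X(D_i)$, while $\calt_{\tilde X/X}$ descends to $\Cl(X)^\vee\otimes\calo_X$ and $\pi^*\calt_X$ descends to $\calt X$. This yields the second sequence of the theorem, the first map being precisely the Euler matrix $\epsilon$ by construction of the infinitesimal generators above; compatibility with the degree map of Theorem \ref{gen} can be checked by restricting to the open torus $T_N\subset X$.

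The first sequence is obtained either by dualizing (using that each term except $\hat\Omega^1_X$ is locally free and $\hat\Omega^1_X=(\calt X)^\vee$ is reflexive), or directly by applying the same descent procedure to the relative cotangent sequence
\begin{equation*}
0 \lra \pi^*\hat\Omega^1_X \lra \Omega^1_{\tilde X} \lra \Omega^1_{\tilde X/X} \lra 0,
\end{equation*}
observing that a section $f\,dx_i$ is $G$-invariant iff $f$ has weight $-[D_i]$, which gives $\pi_*(\calo_{\tilde X}\,dx_i)^G=\calo_X(-D_i)$ and hence the middle term $\bigoplus_i\calo_X(-D_i)$ as stated.

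The main obstacle is dealing with the orbifold singularities of $X$: because $X$ is only simplicial, $\hat\Omega^1_X$ is merely reflexive, and the quotient map $\pi$ is a principal $G$-bundle only off a closed subset of codimension at least two. To handle this I would first establish exactness of the descended sequences on the smooth locus $U_0\subset X$, where all sheaves are locally free and the standard arguments apply directly, and then extend the sequences uniquely to all of $X$ using the $S_2$ property of the reflexive hull together with the codimension-two estimate.
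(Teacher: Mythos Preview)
The paper does not prove this theorem at all: it is quoted verbatim from \cite[Theorem 12.1]{BatyrevCox} as background material in the preliminaries, with no argument given. So there is no ``paper's own proof'' to compare against.

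Your outline is essentially the standard proof via the Cox quotient, and it is correct in its main lines. The identification of $\calt_{\tilde X/X}$ with $\mathfrak g\otimes\calo_{\tilde X}\simeq\Cl(X)^\vee\otimes_\Z\calo_{\tilde X}$ (via the vector fields $\sum_i v_i x_i\partial_{x_i}$ generating the $G$-action), the weight bookkeeping under descent, and the reduction to the smooth locus followed by reflexive extension across codimension two are exactly the right ingredients. Two small points worth tightening: first, the exactness of the relative (co)tangent sequence and the identification $\pi^*\calt_X\simeq\calt_{\tilde X}/\calt_{\tilde X/X}$ use that $\pi$ is smooth, which only holds over the free locus of the $G$-action --- you already address this with the codimension-two argument, but it should be invoked for this step as well, not only for the reflexive extension at the end; second, the exactness of $\pi_*(-)^G$ relies on $G$ being linearly reductive (diagonalizable), which you might state explicitly. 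With those clarifications the argument goes through.
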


Jaczewski showed in \cite{Jaczewski} that, if $X$ is a smooth variety that has a generalized Euler sequence, then $X$ is a toric variety. 

\begin{prop}[{\cite[Lemma 3.8]{BatyrevCox}}] \label{Euler relation}
If $\phi\in\Hom_{\Z}(\Cl(X),\Z)$ and $f\in H^0(\calo_{X}(\beta))$, then there exists a generalized Euler relation

$$\sum_{i=1}^r\phi([D_i])x_i\frac{\partial f}{\partial x_i}=\phi(\beta)\cdot f 
$$   
\end{prop}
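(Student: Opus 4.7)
The plan is to reduce to monomials using the $\Cl(X)$-grading on the Cox ring $S$ recalled in Section \ref{sec:pre}. Under the standing simplicial and no torus factor hypotheses (which is the setting in which Theorem \ref{gen} provides a short exact sequence for the class group), one has $H^0(\calo_X(\beta)) = S_\beta$, the $\beta$-graded piece of the Cox ring. Since both sides of the desired identity are $\C$-linear in $f$, it suffices to verify the formula on a monomial basis of $S_\beta$, namely on $f = x^a := \prod_{i=1}^r x_i^{a_i}$ with $\deg(x^a) = \sum_{i=1}^r a_i [D_i] = \beta$ in $\Cl(X)$.

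For such a monomial the calculation is immediate: $x_i \partial_{x_i} x^a = a_i\, x^a$ for each $i$, hence
\[
\sum_{i=1}^r \phi([D_i])\, x_i \frac{\partial x^a}{\partial x_i} = \left(\sum_{i=1}^r a_i \phi([D_i])\right) x^a = \phi\!\left(\sum_{i=1}^r a_i [D_i]\right) x^a = \phi(\beta)\cdot x^a,
\]
where the second equality uses the $\Z$-linearity of $\phi$ and the third uses $\deg(x^a) = \beta$. Summing over the monomials appearing in $f$ yields the Euler relation.

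A more conceptual route would be to interpret the identity as the infinitesimal form of the $G$-action on $\C^r$ underlying the GIT presentation $X \simeq (\C^r \setminus Z(\Sigma))/G$ with $G = \Hom_\Z(\Cl(X),\C^*)$. A class $\phi \in \Hom_\Z(\Cl(X),\Z)$ defines a cocharacter $\C^* \to G$, $t \mapsto (t^{\phi([D_i])})_{i=1}^r$, along which any $f \in S_\beta$ is rescaled by $t^{\phi(\beta)}$; differentiating at $t=1$ reproduces the stated formula. The only non-trivial ingredient, beyond Leibniz and linearity, is the identification $H^0(\calo_X(\beta)) = S_\beta$ that licenses the reduction to monomials; this is the main (and mild) obstacle, and it is already built into the setup of simplicial toric varieties without torus factors.
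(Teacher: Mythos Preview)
Your argument is correct: the reduction to monomials via $H^0(\calo_X(\beta))=S_\beta$ followed by the one-line computation $x_i\partial_{x_i}x^a=a_ix^a$ is exactly the standard proof, and your alternative interpretation via the cocharacter $\C^*\to G$ is also valid. Note that the paper does not actually supply a proof of this proposition---it simply quotes \cite[Lemma 3.8]{BatyrevCox}---so there is nothing to compare against beyond observing that your verification is the expected one.
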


\begin{rmk} If the rank of $\Cl(X)$ is strictly greater than 1, then there are more than 1 generalized Euler relations depending on the choice of $\phi$.
\end{rmk}

\section{Toric logarithmic tangent sheaves} \label{sec:sheaves}

Following Sernesi \cite{sernesi}, recall that the \textit{logarithmic tangent sheaf} $\calt_{X}\langle D \rangle$ of a reduced divisor $D$ in a variety $X$ is defined as the kernel of the composition:
\begin{equation} \label{eq:tau}
\tau_D : \calt_{X} \longrightarrow \calt_X|_D \longrightarrow \calo_D(D)
\end{equation}
where $\calo_D(D)$ is seen as the normal sheaf of $D$ in $X$ and the map $\calt_X|_D \lra \calo_D(D)$ is the usual epimorphism appearing in the normal sheaf sequence.

We remark that $\calt_{X}\langle D \rangle:=\ker(\tau_D)$ is always reflexive; in addition, if $X$ is non-singular, and $D$ is a normal crossing divisor, then $\calt_{X}\langle D \rangle$ is locally free.

\medskip

Here, we will define two classes of sheaves of logarithmic derivations that are adapted to the context of toric geometry and Jacobian matrices and explain how they are related to $\calt_{X}\langle D \rangle$.
Let $X=X_\Sigma$ be a $n$-dimensional simplicial toric variety and consider its Cox ring
$$ S=\C[x_1,\dots, x_r]=\bigoplus_{\alpha\in \Cl(X)}H^0(\calo_{X}(\alpha)).$$ 

Consider a nonzero homogogenous element $f \in S$ and set $D=\VV(f)$, $\beta = \deg(f) \in \Cl(X)$. We assume that $D$ is reduced. Then we have the Jacobian matrix:
\[\nabla(f)= \left(\frac{\partial f}{\partial x_1},\ldots,\frac{\partial f}{\partial{x_r}}\right) : \bigoplus_{1 \le i \le r} \calo_{X}(D_i)\lra \calo_{X}(\beta).
\]
Composing this with the projection $\calo_X \to \calo_D$ we get:
\[\overline \nabla(f)= \left(\frac{\partial f}{\partial x_1},\ldots,\frac{\partial f}{\partial{x_r}}\right) : \bigoplus_{1 \le i \le r} \calo_{X}(D_i)\lra \calo_{D}(\beta).
\]

We introduce the following two sheaves associated with $D$.

\begin{dfn} \label{defn:log-sheaves}
Let $X = X_\Sigma$ be a simplicial toric variety and let $D$ be a reduced divisor of $X$. The \textit{extended toric logarithmic sheaf} and the \textit{toric logarithmic sheaf} associated with a reduced divisor $D\subset X$ are respectively defined as:
$$ \calt_\Sigma\langle D \rangle := \ker\Big(\overline \nabla(f)\Big), \qquad {\rm and} \qquad \calt_\Sigma\langle D \rangle_0  := \ker\Big(\nabla(f)\Big). $$
\end{dfn}

The extended toric logarithmic tangent sheaf is related to the classical logarithmic tangent sheaf by a very simple exact sequence, which also explains our choice of terminology.

\begin{prop} \label{T vs TSigma}
For a reduced divisor $D \subset X$, we have:
\[ 0 \lra \Cl(X)^\vee\otimes_{\Z} \calo_{X} \lra \calt_\Sigma\langle D \rangle \lra \calt_{X}\langle D \rangle \lra 0. \]
In particular, $\calt_\Sigma\langle D \rangle $ is reflexive, and it is locally free whenever $\calt_{X}\langle D \rangle$ is locally free.
\end{prop}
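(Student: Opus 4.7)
The plan is to combine the generalized Euler sequence of Section \ref{generalized Euler},
\[
0 \lra \Cl(X)^\vee \otimes_{\Z} \calo_X \xrightarrow{\epsilon} \bigoplus_{i=1}^r \calo_X(D_i) \xrightarrow{\pi} \calt_X \lra 0,
\]
with the defining sequence $0 \lra \calt_X\langle D\rangle \lra \calt_X \xrightarrow{\tau_D} \calo_D(D) \lra 0$ from \eqref{eq:tau}, using the Jacobian map $\overline\nabla(f) : \bigoplus_{i=1}^r \calo_X(D_i) \lra \calo_D(D)$ as a comparison morphism; note $\beta = [D]$, so $\calo_D(\beta) = \calo_D(D)$.

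The key step is to verify commutativity of the square attached to these two sequences, namely the identity $\tau_D \circ \pi = \overline\nabla(f)$. Both maps vanish after pre-composition with $\epsilon$: for $\tau_D \circ \pi \circ \epsilon$ this is immediate from exactness of the Euler sequence, while for $\overline\nabla(f) \circ \epsilon$ it follows from the generalized Euler relation of Proposition~\ref{Euler relation}, which gives $\nabla(f) \circ \epsilon(\phi) = \phi(\beta) \cdot f$, a section divisible by $f$ and therefore zero modulo $f$. Consequently both maps factor through $\calt_X$, and the two resulting morphisms $\calt_X \to \calo_D(D)$ agree with $\tau_D$; this last identification can be checked on the open torus $T \subset X$, where $\pi$ is given by the usual partial derivatives along coordinate directions and both maps reduce to the standard formula.

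With commutativity in hand, the snake lemma applied to the pair of vertical columns yields the desired sequence
\[
0 \lra \Cl(X)^\vee \otimes_{\Z} \calo_X \lra \calt_\Sigma\langle D\rangle \lra \calt_X\langle D\rangle \lra 0.
\]
Surjectivity on the right is a local statement verifiable directly: any local section $\theta$ of $\calt_X\langle D\rangle \subset \calt_X$ admits, by surjectivity of $\pi$, a local lift $\tilde\theta$ to $\bigoplus_i \calo_X(D_i)$, and since $\overline\nabla(f)(\tilde\theta) = \tau_D(\pi(\tilde\theta)) = \tau_D(\theta) = 0$, the lift $\tilde\theta$ lies in $\calt_\Sigma\langle D\rangle = \ker(\overline\nabla(f))$.

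For the final assertion, $\Cl(X)^\vee\otimes_{\Z}\calo_X$ is locally free and $\calt_X\langle D\rangle$ is reflexive (as recalled right after its definition). On the normal variety $X$, an extension of a reflexive sheaf by a locally free sheaf is reflexive (double-dualize the sequence and apply the five lemma, using that $\inext^i$ against a locally free sheaf vanishes for $i \ge 1$), so $\calt_\Sigma\langle D\rangle$ inherits reflexivity; the same extension argument gives local freeness of $\calt_\Sigma\langle D\rangle$ whenever $\calt_X\langle D\rangle$ is locally free. The main obstacle in the whole argument is pinning down the equality $\tau_D \circ \pi = \overline\nabla(f)$; once this is established, the remainder is a formal diagram chase.
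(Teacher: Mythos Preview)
Your proof is correct and follows essentially the same approach as the paper: both arguments build the commutative diagram with the generalized Euler sequence as the middle row and the defining sequence of $\calt_X\langle D\rangle$ as the bottom row, identify $\overline\nabla(f)$ as a lift of $\tau_D$ along $\pi$, and then read off the left column. Your justification of the commutativity $\tau_D\circ\pi=\overline\nabla(f)$ via the generalized Euler relation is a bit more explicit than the paper's appeal to local coordinates, but the strategy and the remaining steps (including the reflexivity/local freeness claim) are the same.
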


\begin{proof} The morphism $\tau_D$ in display \eqref{eq:tau} can be written, in local coordinates, as the restriction to $D$ of the gradient of a defining equation $f$ of $D$. Up to lifting vector fields to local sections of $\bigoplus_{1 \le i \le r}\calo_X(D_i)$ via the Euler sequence, this map is globally described as the map $\overline \nabla (f)$, hence we get a commutative diagram
$$ \xymatrix{
 & 0 \ar[d] & 0 \ar[d] \\ 
 &         \Cl(X)^\vee\otimes_{\Z} \calo_{X} \ar[d] \ar@{=}[r] & \Cl(X)^\vee\otimes_{\Z} \calo_{X} \ar[d] \\
0 \ar[r] &   \calt_\Sigma\langle D \rangle \ar[r] \ar[d] & \bigoplus_{1 \le i \le r} \calo_X(D_i) \ar^-{\overline \nabla(f)}[r] \ar[d] & \calo_D(\beta) \ar@{=}[d]\\
0 \ar[r] &     \calt_{X}\langle D \rangle \ar[d]\ar[r] & \calt_X \ar^-{\tau_D}[r] \ar[d] & \calo_D(\beta) \\
& 0 & 0 } $$

The leftmost column gives the desired sequence. Since $\calt_{X}\langle D \rangle$ is reflexive, then $\calt_\Sigma\langle D \rangle$ is also reflexive because it is an extension of a reflexive sheaf. Similarly, if $\calt_{X}\langle D \rangle$ is locally free, then so is $\calt_\Sigma\langle D \rangle$.
\end{proof}

Next, we explain the relation between the two sheaves introduced in Definition \ref{defn:log-sheaves}.

\begin{prop} \label{T0}
Given a reduced divisor $D=\VV(f) \subset X$ with $\deg(f)=\beta\in\Cl(X)$, choose $\phi\in\Hom_{\Z}(\Cl(X),\Z)$ such that $\phi(\beta) \ne 0$. Then we have a splitting:
\[ \calt_\Sigma\langle D \rangle \simeq \calt_\Sigma\langle D \rangle_0 \oplus \calo_X. \]
\end{prop}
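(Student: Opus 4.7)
The plan is to compare the two definitions $\calt_\Sigma\langle D\rangle=\ker(\bar\nabla(f))$ and $\calt_\Sigma\langle D\rangle_0=\ker(\nabla(f))$ via the standard short exact sequence
\[
0 \lra \calo_X \xrightarrow{\cdot f} \calo_X(\beta) \lra \calo_D(\beta) \lra 0,
\]
where the first map, multiplication by $f$, identifies $\calo_X$ with $\calo_X(\beta-D)=\calo_X$ since $\beta=[D]$. Applying the snake lemma (or a direct diagram chase) to the vertical inclusion of $\nabla(f)$ into $\bar\nabla(f)=\pi\circ \nabla(f)$, with $\pi:\calo_X(\beta)\to\calo_D(\beta)$, produces a left-exact sequence
\[
0 \lra \calt_\Sigma\langle D\rangle_0 \lra \calt_\Sigma\langle D\rangle \lra \calo_X,
\]
where the last arrow sends a local section $\theta\in\calt_\Sigma\langle D\rangle$ to $\nabla(f)(\theta)/f\in\calo_X$ (well-defined because $\nabla(f)(\theta)$ lies in $f\cdot\calo_X$ by definition of $\calt_\Sigma\langle D\rangle$).

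The next step is to show that this last map is surjective by exhibiting a section of $\calt_\Sigma\langle D\rangle$ whose image is the constant function $1$. Here the generalized Euler relation of Proposition \ref{Euler relation} plays its role: taking the chosen $\phi$ with $\phi(\beta)\ne 0$, set
\[
v_\phi := \bigl(\phi([D_1])x_1,\ldots,\phi([D_r])x_r\bigr) \in \bigoplus_{i=1}^r H^0(\calo_X(D_i)).
\]
The Euler relation gives $\nabla(f)(v_\phi)=\phi(\beta)\cdot f$, so $v_\phi$ defines a global section of $\calt_\Sigma\langle D\rangle$, and its image in $\calo_X$ is the nonzero constant $\phi(\beta)$. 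Hence $s_\phi := v_\phi/\phi(\beta)$ maps to $1$, so we obtain the short exact sequence
\[
0 \lra \calt_\Sigma\langle D\rangle_0 \lra \calt_\Sigma\langle D\rangle \lra \calo_X \lra 0,
\]
which is split by the morphism $\calo_X \to \calt_\Sigma\langle D\rangle$ sending $1\mapsto s_\phi$. This produces the desired decomposition $\calt_\Sigma\langle D\rangle \simeq \calt_\Sigma\langle D\rangle_0 \oplus \calo_X$.

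There is no serious obstacle here; the only point requiring a bit of care is verifying that the diagram chase genuinely yields a map $\calt_\Sigma\langle D\rangle \to \calo_X$ (rather than, say, a map to some subsheaf of $\calo_X$ that could depend on $f$) and that the Euler vector $v_\phi$ really lives in $\calt_\Sigma\langle D\rangle$, both of which are automatic once one unravels the definitions. The hypothesis $\phi(\beta)\ne 0$, which is possible since $\beta$ is a nonzero class (the class of the divisor $D$) and $\Cl(X)$ is a finitely generated abelian group for which $\Hom_\Z(\Cl(X),\Z)$ separates non-torsion elements, is used only in this last normalization step.
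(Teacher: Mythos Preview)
Your proof is correct and follows essentially the same approach as the paper: both obtain the short exact sequence $0\to\calt_\Sigma\langle D\rangle_0\to\calt_\Sigma\langle D\rangle\to\calo_X\to 0$ from the factorization $\bar\nabla(f)=\pi\circ\nabla(f)$ and then split it using the Euler vector $v_\phi$ and the generalized Euler relation. The paper phrases the middle step through the Jacobian ideal sheaf $\cali_{Z/X}(\beta)=\im(\nabla(f))$, whereas you go directly through the sequence $0\to\calo_X\xrightarrow{f}\calo_X(\beta)\to\calo_D(\beta)\to 0$; this is a cosmetic difference, not a substantive one.

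One small caveat in your closing remark: saying ``$\beta$ is a nonzero class'' is not quite enough to guarantee the existence of $\phi$ with $\phi(\beta)\ne 0$, since $\beta$ could in principle be torsion in $\Cl(X)$. But the proposition simply \emph{assumes} such a $\phi$ exists, so this does not affect the argument.
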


\begin{proof}
The image of map $\nabla(f)$ is the ideal sheaf of a closed subscheme $Z$ of codimension at least $2$ in $X$, tensored with $\calo_X(\beta)$. We denote this by $\cali_{Z/X}(\beta)$, and call $Z$ the Jacobian subscheme of $f$. Since $D$ is reduced, $Z$ has codimension at least $2$ in $X$. Also, $Z$ is contained in $D$ because, being $\phi(\beta) \ne 0$, the Euler relation (see Proposition \ref{Euler relation}) induces an exact sequence:
\begin{equation} \label{split T0}
0 \lra \calo_X \lra \cali_{Z/X}(\beta) \lra \cali_{Z/D}(\beta) \lra 0.
\end{equation}
Now, by definition, we have an inclusion $\calt_\Sigma\langle D \rangle_0 \subset \calt_\Sigma\langle D \rangle$ giving rise to an exact sequence:
\begin{equation} \label{Euler Jacobian}
0 \lra \calt_\Sigma\langle D \rangle_0  \lra \calt_\Sigma\langle D \rangle \lra \calo_X \lra 0,
\end{equation}
where the factor $\calo_X$ appears in view of \eqref{Euler Jacobian}. However, again the Euler relation shows that the map $\calo_X \to \bigoplus_{1 \le i \le r} \calo_X(D_i)$ given by $(x_1,\ldots,x_r)$ provides a splitting of \eqref{Euler Jacobian}.
\end{proof}

We conclude that $\calt_\Sigma\langle D \rangle_0$ is also reflexive, and it is locally free if and only if $\calt_\Sigma\langle D \rangle$ is locally free. Moreover, putting together Propositions \ref{T vs TSigma} and \ref{T0}, we obtain the exact sequence
\begin{equation} \label{eq:sqct0tx}
0 \lra \calo_{X}^{\oplus \rho(X)-1} \lra \calt_\Sigma\langle D \rangle_0 \lra \calt_{X}\langle D \rangle \lra 0,
\end{equation}
holding under the same hypotheses as in Proposition \ref{T0}. In particular, $\calt_\Sigma\langle D \rangle_0$ is isomorphic to
$\calt_{X}\langle D \rangle$ whenever $X$ has Picard rank 1.

\begin{ex}\label{ex:34}
Let $X=\PP\big(\op1(a_1)\oplus\cdots\oplus\op1(a_n)\big)\stackrel{\pi}{\to}\PP^1$ with $0\le a_1\le\cdots\le a_n$; set $a:=a_1 + \cdots + a_n$. This is a toric variety of dimension $n$; its Cox ring can be written as  $\C[x,y,u_1,\dots,u_n]$, where $x,y$ are coordinates for the base of the fibration $\pi$ and $u_1,\dots,u_n$ are coordinates in the fibres; in addition, $\Pic(X)=\Z\cdot H\oplus\Z\cdot F$ where  $F:=c_1(\pi^*\op1(1))$ and $H:=c_1(\calo_{\PP}(1))$ and $\calo_{\PP(1)}$ is the relative hyperplane bundle.

Let $D=L_1\cup\cdots\cup L_k$ be the divisor given by a union of fibers; note that $D=\V(f)$ for some polynomial $f\in\C[x,y,u_1,\dots,u_n]$ that depends only on $x$ and $y$ and $\deg(D)=kF$. We will now describe both $\calt_\Sigma\langle D \rangle_0$ and $\calt_{X}\langle D \rangle$.
The divisors $\VV(x)$ and $\VV(y)$ are of class $F$, while $\VV(u_j)$ is of class $H$ for $1 \le j \le n$. 
Then we have an exact sequence
$$ 0 \lra \calt_\Sigma\langle D \rangle_0 \lra 
\calo_X(F)^{\oplus 2} \oplus \calo_X(H)^{\oplus n} \xrightarrow{\nabla(f)}\ox(kF).$$
We note that, since $f$ only depends on the variables $x$ and $y$, the latter $n$ summands of the middle term must factor to the kernel of $\nabla(f)$, so 
$$ \calt_\Sigma\langle D \rangle_0 \simeq \calo_X(H)^{\oplus n}\oplus \ker\Big(\calo_X^{\oplus 2} \to \ox(kF)\Big); $$
The second summand must be a rank 1 reflexive sheaf, so it is the line bundle since $X$ is non-singular; computing degrees, we conclude that it must be $\calo_X((2-k)F)$. It follows that $\calt_\Sigma\langle D \rangle_0$ splits as a sum of line bundles and the exact sequence in display \eqref{eq:sqct0tx} becomes
\begin{equation} \label{eq:sqctxd}
0 \lra \ox \lra 
\calo_X(H)^{\oplus n}
\oplus \ox((2-k)F) \lra \calt_X\langle D \rangle \lra 0.
\end{equation}

Assuming now that $k\ge3$, we have that the morphism $\ox\to\ox((2-k)F)$ in the previous sequence must vanish, so $\ox((2-k)F)$ must be a summand of $\calt_X\langle D \rangle$. Additionally, the cokernel of the morphism $\ox \lra \calo_X^{\oplus n}(H)$ is precisely the relative tangent bundle $T_\pi X$ with respect to the base $\PP^1$. We then conclude that $\calt_X\langle D \rangle\simeq\ox((2-k)F)\oplus T_\pi X$.

Finally, when $n=2$, then $T_\pi X=\ox(2H-aF)$, so $\calt_X\langle D \rangle\simeq\ox((2-k)F)\oplus \ox(2H-aF)$, a fact observed by di Gennaro and Malaspina in \cite[Proposition 4.3]{dGM} under the condition that $k\ge a_2-a_1+1$.
\end{ex}

\section{The Saito criterion for toric logarithmic sheaves} \label{sec:saito}

An important class of divisors in the study of singularities and their unfoldings is that of free divisors. Here, we give a definition of freeness that is suitable for the toric setting. Let $X=X_\Sigma$ be a simplicial toric variety.

\begin{dfn} \label{defn:free}
A coherent sheaf $\calf$ is \textit{free} if it is the direct sum of rank 1 reflexive sheaves:
$$\calf \simeq   \bigoplus_{i=1}^s\calo_{X}(\alpha_i).$$
Here  $s$ is the rank of $\calf$ and, for $1 \le i \le s$,  $\alpha_i\in\Cl(X)$ is a class of a  Weil divisor.
In this case $(-\alpha_1,\ldots,-\alpha_t)$ are called the \textit{exponents} of $\calf$. Note that the exponents are in $\Cl(X)$.
\end{dfn}

The goal of this section is to give a simple, effective criterion, analogous to Saito's freeness criterion, to check whether the extended toric logarithmic tangent sheaf associated with a reduced divisor is free. Recall the map $\epsilon$ appearing in the Euler sequence, see \S \ref{generalized Euler}.

Before enunciating our criterion, we must understand the notion of homogeneous syzygies in the toric context.

\subsection{Jacobian syzygies}

Here we interpret the maps appearing in the Saito criterion as homogeneous syzygies. We do it for an algebraically independent family $\ff=(f_1,\ldots,f_k) \in S^k$.

\begin{dfn} A \textit{homogeneous syszygy} of $\nabla(\ff)$ is an $r$-tuple $\mu=(\mu_1,\dots, \mu_r) \in S^{\oplus r}$ with:
\[\nabla (\ff) \circ \mu=0, \qquad 
\deg(\mu_1) -\deg(x_1) =\cdots =\deg(\mu_r)-\deg(x_r).\]

Letting $\kappa \in \Cl(X)$ be $\deg(\mu_i)-\deg(x_i)$, for any given $i$ in $1,\ldots,r$, we get that  
$\deg(\mu_i)=\kappa+\deg(x_i)$.
Hence $\mu$ can be regarded as a morphism of sheaves:
$$\calo_{X}(-\kappa)\xlongrightarrow{\mu}  \bigoplus_{i=1}^r\calo_{X}(D_i) $$
\end{dfn}

The next lemma provides the elementary relationship between syzygies and global sections.

\begin{lma} For $\kappa\in \Cl(X)$, let $\operatorname{Syz}_{\kappa}(\nabla(\ff))$ be the syzygies of degree $\kappa$ for $\nabla(\ff)$. Then,
$$ H^0(\calt_\Sigma\langle \ff\rangle_0(\kappa))\simeq \operatorname{Syz}_{\kappa}(\nabla(\ff)). $$
\end{lma}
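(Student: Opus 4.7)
The plan is to unwind the definitions: twist the defining kernel sequence for $\calt_\Sigma\langle\ff\rangle_0$ by $\calo_X(\kappa)$, take global sections, and identify the resulting maps of $\C$-vector spaces with graded pieces of the Cox ring $S$ via the standard dictionary.

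First, I would write down the defining left exact sequence of $\calt_\Sigma\langle\ff\rangle_0$, namely
\[
0 \lra \calt_\Sigma\langle\ff\rangle_0 \lra \bigoplus_{i=1}^r \calo_X(D_i) \xrightarrow{\;\nabla(\ff)\;} \bigoplus_{j=1}^k \calo_X(\beta_j),
\]
where $\beta_j = \deg(f_j)\in \Cl(X)$. Twisting by the divisorial sheaf $\calo_X(\kappa)$ (which preserves left exactness for sheaves on the simplicial toric variety $X$ since $\calo_X(\kappa)$ is reflexive of rank $1$ and the sequence is exact already as a sequence of torsion-free sheaves on the smooth locus, whose complement has codimension $\ge 2$), and applying $H^0$, I obtain a left exact sequence
\[
0 \lra H^0(\calt_\Sigma\langle\ff\rangle_0(\kappa)) \lra \bigoplus_{i=1}^r H^0(\calo_X(D_i+\kappa)) \lra \bigoplus_{j=1}^k H^0(\calo_X(\beta_j+\kappa)).
\]

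Next, I would invoke the standard identification on a simplicial toric variety without torus factors, $H^0(\calo_X(\alpha)) \simeq S_\alpha$ for every $\alpha \in \Cl(X)$, so that the middle term becomes $\bigoplus_{i=1}^r S_{\deg(x_i)+\kappa}$. An element of this direct sum is precisely a tuple $\mu = (\mu_1,\dots,\mu_r)$ of elements of $S$ satisfying $\deg(\mu_i) - \deg(x_i) = \kappa$ for every $i$, which is the homogeneity condition in the definition of $\operatorname{Syz}_\kappa(\nabla(\ff))$. Similarly, the rightmost term becomes $\bigoplus_{j=1}^k S_{\beta_j+\kappa}$, and the induced map is literally multiplication by the Jacobian matrix: it sends $\mu$ to the column vector with $j$-th entry $\sum_i \frac{\partial f_j}{\partial x_i}\mu_i$. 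The kernel of this map on graded pieces is by definition $\operatorname{Syz}_\kappa(\nabla(\ff))$, giving the claimed isomorphism.

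The only mild subtlety — not really an obstacle — is justifying the exactness of the twist by a divisorial sheaf and the identification $H^0(\calo_X(\alpha)) \simeq S_\alpha$; both facts are standard on a simplicial toric variety without torus factors and follow from \cite{CoxLittleSchenck} (the first via restriction to the smooth locus whose complement has codimension $\ge 2$, together with reflexivity of the sheaves involved; the second is the classical description of the Cox ring as $\bigoplus_\alpha H^0(\calo_X(\alpha))$ already used in Section~\ref{sec:sheaves}).
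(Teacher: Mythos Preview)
Your proof is correct and follows essentially the same approach as the paper: both identify a degree-$\kappa$ syzygy with a morphism $\calo_X(-\kappa)\to\bigoplus_i\calo_X(D_i)$ factoring through the kernel $\calt_\Sigma\langle\ff\rangle_0$, i.e.\ a global section of $\calt_\Sigma\langle\ff\rangle_0(\kappa)$. The paper's version is terser, phrasing it directly in terms of such factoring morphisms rather than twisting the whole sequence and taking $H^0$; your extra care about the twist by a non--locally-free divisorial sheaf is a welcome clarification but not a different idea.
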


\begin{proof}
Let $\mu$ be a homogeneous syzygy of degree $\kappa$. Then $\nabla(\ff)\circ \mu = 0$, so the map $\mu : \calo_{X}(-\kappa)\to \bigoplus_{i=1}^r\calo_{X}(D_i)$ factors through $\calt_\Sigma\langle \ff\rangle_0$ and hence provides an element of $H^0(\calt_\Sigma\langle \ff\rangle_0(\kappa))$. The converse follows the same argument.
\end{proof}

\begin{dfn}[Coefficient matrix] \label{coefficient matrix} Consider a vector of $p$ homogeneous syzygies 
$\nu=(\nu_1,\dots \nu_p)$ of $\nabla(\ff)$. Then the \textit{coefficient matrix} of $\nu$ is
$$M = M(\nu):=\left(\begin{array}{cccccc}
   \nu_{1,1}&\dots &\nu_{1,p}  \\
   \vdots  & \dots & \vdots     \\
   \nu_{r,1}&\dots  & \nu_{r,p} 
   \end{array}\right)
   $$
where  $\nu_{i,j}$ are the coefficents of the syzygy $\nu_j$, for $1 \le k \le p$ and $1 \le i \le r$.
\medskip

Fix $\phi_1,\dots, \phi_{\rho}$, a basis of $\Cl(X)^{\vee}$. With respect to this basis, we set, for $1 \le i \le \rho$:
$$\epsilon_i = \left(\begin{array}{c}
     \phi_1([D_1]) x_1   \\
     \vdots \\
    \phi_r([D_r]) x_r
\end{array}\right), \qquad \epsilon = (\epsilon_1,\ldots,\epsilon_\rho); $$  
these are the coefficients of Euler derivations associated with $\phi_1,\ldots,\phi_\rho$. We then consider the $r \times r$ matrix of elements of $S$ given by (recall that $n+\rho=r$):
$$ M(\nu|\epsilon):=\left(\begin{array}{cccccc}
   \nu_{11}&\dots &\nu_{1,n} & \phi_1([D_1])x_1&\cdots &\phi_{\rho}([D_1]) x_1\\
   \vdots  & \dots & \vdots & \vdots &\cdots&\vdots   \\
   \nu_{r,1}&\dots  & \nu_{r,n} & \phi_1([D_r])x_r &\cdots &\phi_{\rho}([D_r]) x_r
   \end{array}\right). $$
Note that $M(\nu|\epsilon)$ can be regarded as a morphism of sheaves
$$ \left( \bigoplus_{1 \le i \le n} \calo_X(-\kappa_i) \right) \oplus \calo_X^{\rho} \longrightarrow \bigoplus_{1 \le i \le r} \calo_X(D_i) . $$
\end{dfn}

We can finally state our general freeness criterion for divisors in toric varieties, rephrased in terms of the syzygy matrix $M(\nu|\epsilon)$.

\begin{thm} \label{thm:saito}
Let $X=X_\Sigma$ be a simplicial toric variety with no torus factors, and let $D =\V(f)\subset X$ be a reduced divisor of class $\beta \in \Cl(X)$.
\begin{enumerate}[label=(\roman*)]
\item \label{saito-i} The sheaf $\calt_\Sigma \langle D\rangle$ is free with exponents $(0^\rho,\kappa_1,\ldots,\kappa_{n})$ if and only if there are $n$-syzygies $\nu=(\nu_1,\dots \nu_{n})$ of $\nabla(f) $ such that $\det (M(\nu|\epsilon)) = cf$ with $c\in \C^*.$
\item \label{saito-ii} Given syzygies $\nu$ as above, we have $\det (M(\nu|\epsilon)) = cf$, with $c\in \C^*$, if,
\[\beta_0 + \sum_{1 \le j \le n} \kappa_j - \beta \quad \mbox{is not effective}
\]
where $\beta_0$ is the sum of classes of all toric divisors, i.e., the anticanonical class of $X$.
\item \label{saito-iii}
If, in addition, $H^0(\calo_{X}(-\kappa_i))=0$ for $1 \le i \le n$, then 
$$\calt_X \langle D\rangle \simeq \calo_X(-\kappa_1) \oplus \cdots \calo_X(-\kappa_n).$$
\end{enumerate}
\end{thm}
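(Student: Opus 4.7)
The overall plan is to view $M(\nu|\epsilon)$ as a morphism of sheaves $\psi: \cale \to \calg$, where $\cale := \bigoplus_{i=1}^n\calo_X(-\kappa_i)\oplus\calo_X^{\oplus\rho}$ and $\calg := \bigoplus_{i=1}^r\calo_X(D_i)$, and to match its image against $\calt_\Sigma\langle D\rangle$. The first observation is that the $\nu_j$-columns are true syzygies of $\nabla(f)$, while the $\epsilon_i$-columns satisfy $\nabla(f)\cdot\epsilon_i=\phi_i(\beta)f$ by Proposition~\ref{Euler relation}, so every column vanishes modulo $f$ and $\psi$ factors through $\calt_\Sigma\langle D\rangle$. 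For the sufficiency half of (i), assuming $\det\psi=cf$ with $c\in\C^*$, I would invoke the elementary-divisor theorem to conclude that $\psi$ is injective with torsion cokernel of rank one supported on $D$. Since the image of $\bar\nabla(f)$ is $\cali_{Z/D}(\beta)$ with $Z$ of codimension $\ge 2$ in $X$ (as $D$ is reduced), a diagram chase identifies $\calt_\Sigma\langle D\rangle/\psi(\cale)$ with a sheaf supported on $Z$. Both $\cale$ and $\calt_\Sigma\langle D\rangle$ being reflexive on the normal variety $X$ (the latter by Proposition~\ref{T vs TSigma}), the $S_2$ property forces $\psi$ to be an isomorphism onto $\calt_\Sigma\langle D\rangle$, which is therefore free with the claimed exponents.

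For the necessity half of (i), I would apply Proposition~\ref{T0} to decompose $\calt_\Sigma\langle D\rangle\simeq\calt_\Sigma\langle D\rangle_0\oplus\calo_X$ and then use Krull--Schmidt to transfer the freeness to $\calt_\Sigma\langle D\rangle_0$, obtaining exponents $(0^{\rho-1},\kappa_1,\ldots,\kappa_n)$ there. A basis of the twisted summands produces genuine syzygies $\nu_1,\ldots,\nu_n$ of $\nabla(f)$. Augmenting with the canonical Euler matrix $\epsilon$, the resulting $M(\nu|\epsilon)$ realizes the composition $\cale\xrightarrow{\sim}\calt_\Sigma\langle D\rangle\hookrightarrow\calg$, so $\det M(\nu|\epsilon)$ is a nonzero section of $\calo_X(\beta_0+\sum_j\kappa_j)$ vanishing on $D$ with multiplicity one (as $D$ is reduced), forcing $\det M(\nu|\epsilon)=cf$ for some $c\in\C^*$.

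Part (ii) follows by a slight strengthening of the same diagram chase: without any determinantal hypothesis, $\mathrm{Im}(\psi)\subseteq\calt_\Sigma\langle D\rangle$ is a proper subsheaf of $\calg$ at the generic point of $D$, which forces $f\mid\det\psi$; the quotient $\det\psi/f$ lies in $H^0(\calo_X(\beta_0+\sum_j\kappa_j-\beta))$, which the non-effectiveness hypothesis pins down to a constant, nonzero thanks to the linear independence of the Euler columns modulo $f$. For (iii), assuming $H^0(\calo_X(-\kappa_i))=0$ for all $i$, the Euler inclusion $\calo_X^{\oplus\rho}\hookrightarrow\calt_\Sigma\langle D\rangle\simeq\bigoplus_i\calo_X(-\kappa_i)\oplus\calo_X^{\oplus\rho}$ must project trivially onto the twisted summands, hence factor through the complementary $\calo_X^{\oplus\rho}$; being injective between sheaves of the same rank, it is an isomorphism onto that summand. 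Quotienting by it and applying Proposition~\ref{T vs TSigma} yields $\calt_X\langle D\rangle\simeq\bigoplus_{i=1}^n\calo_X(-\kappa_i)$. The main obstacle I anticipate is the necessity direction of (i): translating the abstract isomorphism $\calt_\Sigma\langle D\rangle\simeq\cale$ into a concrete matrix whose degree-zero columns are precisely the canonical Euler derivations rather than arbitrary generators of a $\calo_X^{\oplus\rho}$ summand, which requires a careful application of Krull--Schmidt-type uniqueness in the category of reflexive sheaves on a simplicial toric variety.
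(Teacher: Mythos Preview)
Your plan coincides with the paper's: view $M(\nu\mid\epsilon)$ as a map $\psi:\cale\to\calg$, factor it through $\calt_\Sigma\langle D\rangle$, and invoke the fact that a monomorphism of equal-rank reflexive sheaves with codimension-$\ge 2$ cokernel is an isomorphism (this is exactly the paper's Lemma~\ref{lem:iso1}, and your ``elementary-divisor theorem plus $S_2$'' is the same mechanism). One imprecision: $\calt_\Sigma\langle D\rangle/\psi(\cale)$ is not literally supported on the Jacobian scheme $Z$ but only in codimension $\ge 2$, and checking this is precisely where the reducedness of $D$ enters. The paper makes this step explicit by restricting to the smooth locus of $X$ away from all but one component $D_0$ of $D$: there $\caloker(\psi)$ is locally a line bundle on $D_0$ because $\det\psi=cf$ has a simple zero along $D_0$ (your elementary-divisor step), so the surjection $\caloker(\psi)\twoheadrightarrow\cali_{Z/D}(\beta)$ is a map of rank-one torsion-free sheaves on $D_0$ and hence an isomorphism away from codimension $2$.

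Your argument for (ii), however, does not go through as written. If $\gamma:=\beta_0+\sum_j\kappa_j-\beta$ is a nonzero non-effective class then $H^0(\calo_X(\gamma))=0$, and your own computation then yields $\det\psi/f=0$, not a nonzero constant; invoking ``linear independence of the Euler columns modulo $f$'' does not rescue this. The paper proceeds differently: it applies Lemma~\ref{lem:iso1} directly to the induced map $\theta:\calf\oplus\calo_X^{\oplus\rho-1}\to\calt_\Sigma\langle D\rangle_0$, using ``$\gamma$ not effective'' as the Chern-class hypothesis there, concludes that $\theta$ is an isomorphism, and then reads off $\caloker(M)\simeq\cali_{Z/D}(\beta)$, whence $\V(\det M)=D$ and $\det M=cf$. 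Finally, the obstacle you flag in the necessity half of~(i)---replacing abstract degree-zero generators by the canonical Euler columns---is real; the paper is equally terse on this point, simply asserting that the syzygies $\nu$ are obtained ``directly'' from the splitting and that $\caloker(M)\simeq\cali_{Z/D}(\beta)$ follows.
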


The proof relies on a technical result regarding reflexive sheaves which might be of independent interest. An easy example goes as follows.

\begin{ex}[Hypercube arrangement] \label{hypercube}
    Let $n \ge 1$ be an integer and let $X$ be the product of $n$ copies of $\PP^1$. Then the Cox ring $S$ is $\C[x_1,y_1,\ldots,x_n,y_n]$, where $x_i,y_i$ generate the vector space of global sections of the line bundle $\calo_X(F_i)$ obtained as pull-back of $\calo_{\PP^1}(1)$ via the projection $\pi_i: X \to \PP^1$ onto the $i$-th factor, for $1 \le i \le n$. Let $f_1,\ldots,f_n$ be homogeneous polynomials of degree $d_1,\ldots,d_n \ge 0$, with $f_i=f_i(x_i,y_i)$ for $1 \le i \le n$ and set $f=f_1\cdots f_n$, $d=d_1\cdots d_n$ and $D=\VV(f)$. Then we get
    \[
    \calt_\Sigma\langle D\rangle \simeq \bigoplus_{i=1}^n \calo_X(-d_i F_i) \oplus \calo_X^{\oplus n},
    \]
    and, if $d_i \ge 2$ for all $i \in \{1,\ldots,n\}$, then:
    \[
    \calt_X\langle D\rangle \simeq \bigoplus_{i=1}^n \calo_X(-d_i F_i) .
    \]
Indeed, we have syzygies $\nu_1,\ldots,\nu_n$ are expressed by column matrices having $n$ blocks of size $2$, the $i$-th block being (for $d_i \ge 1$) the transpose of 
\[
\left(\frac{\partial f_i}{\partial y_i}, \quad -\frac{\partial f_i}{\partial x_i}\right).
\]
Hence, considering the Euler syzygies $\epsilon_i=(x_i, y_i)$ for $1 \le i \le n$, up to rearranging the entries of $M(\nu| \epsilon)$ we get a block-diagonal matrix of $n$ blocks of size 2, the $i$-th block being
\[
\begin{pmatrix}
    \frac{\partial f_i}{\partial y_i}  & x_i\\ 
    -\frac{\partial f_i}{\partial x_i} & y_i
\end{pmatrix}
\]
The determinant of each such block is $d_i f_i$, hence $\det(M(\nu|\epsilon))=d f$ and the theorem applies.
\end{ex}

\subsection{A general lemma on reflexive sheaves}

Let us highlight a simple and general statement affording isomorphisms of reflexive sheaves that will be useful later on.

\begin{lma} \label{lem:iso1}
Let $W$ be a normal integral scheme, and let $\theta: \calf \hookrightarrow \calt$ be a monomorphism between reflexive sheaves on $W$, with $rk(\calf)=rk(\calt)$. If $\calodim(\V(\det(\theta)))\geq2$, or alternatively if $c_1(\calt)-c_1(\calf)$ is not effective, then $\theta$ is an isomorphism.
\end{lma}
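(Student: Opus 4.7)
The plan is to show that $\theta$ becomes an isomorphism on an open subset of $W$ whose complement has codimension at least $2$, and then extend this to an isomorphism on all of $W$ by reflexivity. The central object is the determinant morphism
\[
\det\theta:\det\calf\lra\det\calt,
\]
where $\det(-):=(\wedge^{\mathrm{top}}(-))^{\vee\vee}$ is the reflexive rank $1$ determinant, whose associated class in the Weil divisor group is $c_1$. Because $\theta$ is a monomorphism between reflexive sheaves of the same rank on the integral scheme $W$, at the generic point it is an injective linear map between vector spaces of equal dimension over the function field, so $\det\theta$ is nonzero. Equivalently, $\det\theta$ determines a nonzero global section $\sigma$ of $\call:=\det\calt\otimes(\det\calf)^{\vee}$, and the zero scheme $\V(\sigma)=\V(\det\theta)$ is an effective Weil divisor representing the class $c_1(\calt)-c_1(\calf)$.

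The next step is to deduce, under either hypothesis, that $\sigma$ vanishes nowhere. Under the first hypothesis $\V(\sigma)$ has codimension at least $2$, but any nonzero effective Weil divisor is pure of codimension $1$, so $\V(\sigma)=\emptyset$. Under the second hypothesis, $\V(\sigma)$ is effective by construction yet its class $c_1(\calt)-c_1(\calf)$ is non-effective by assumption; the only way to reconcile this is $\V(\sigma)=\emptyset$. In either case, $\det\theta$ is an isomorphism of rank $1$ reflexive sheaves.

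To finish, let $U\subseteq W$ be the open locus where both $\calf$ and $\calt$ are locally free. Since reflexive sheaves on a normal scheme are locally free outside a subset of codimension at least $2$, $W\setminus U$ has codimension at least $2$. On $U$ the restriction $\theta|_{U}$ is a morphism of vector bundles of equal rank with everywhere-invertible determinant, hence an isomorphism by Cramer's rule. Writing $j:U\hookrightarrow W$, reflexivity gives $\calf\simeq j_{*}j^{*}\calf$ and $\calt\simeq j_{*}j^{*}\calt$, so pushing the isomorphism $\theta|_{U}$ forward along $j_{*}$ produces the desired isomorphism $\theta:\calf\to\calt$ on all of $W$.

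The main subtlety I anticipate is pinning down the reading of the second hypothesis: the nonzero section $\sigma$ automatically exhibits $c_1(\calt)-c_1(\calf)$ as the class of the effective divisor $\V(\sigma)$, so the assumption that this class be non-effective must be interpreted as forcing $\V(\sigma)$ to be the zero divisor. Once this is made explicit, both hypotheses collapse to the same statement $\V(\sigma)=\emptyset$, and the remainder of the argument is standard bookkeeping with the double dual of $\wedge^{\mathrm{top}}$ and with the Hartogs extension property for reflexive sheaves on normal schemes.
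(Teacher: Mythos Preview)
Your proof is correct and essentially matches the paper's: both show that $\theta$ is an isomorphism off a closed subset of codimension at least $2$ and then extend to all of $W$ via the Hartogs property of reflexive sheaves on a normal scheme. The only cosmetic difference is the choice of that closed subset---the paper works directly with $Y=\supp(\caloker\theta)$ and argues $\theta$ is an isomorphism on each open set by restriction to $U\setminus Y$, whereas you first deduce that $\det\theta$ is an isomorphism, pass to the locally free locus, and invoke Cramer's rule there---but the extension step is identical.
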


\begin{proof}
Let $Y:=\V(\det(\theta))=\supp(\caloker(\theta))$. The assumption is that $Y$ has codimension at least two in $W$. Note that, since the divisorial part of $\caloker(\theta)$ has class $c_1(\calt)-c_1(\calf)$, this is guaranteed by the fact that such a divisor class is not effective. 

Then, for any open subset $U\subseteq W$, we have an exact sequence
$$ 0 \lra \calf(U) \stackrel{\theta(U)}{\lra} \calt(U) \lra \caloker(\theta(U)) $$
If $U\cap Y=\emptyset$, then $\theta(U):\calf(U) \to \calt(U)$ is an isomorphism. If $V:=U\cap Y\neq\emptyset$, then we use the fact that every reflexive sheaf on a normal integral scheme is also normal (see \cite[Proposition 1.6]{Hartshorne1980} and the definition right before it) to obtain the sequence of isomorphisms
$$ \calf(U) \simeq \calf(U\setminus V) \stackrel{\theta\big(U\setminus V\big)}{\lra} \calt(U\setminus V) \simeq \calt(U); $$
the morphism in the middle is an isomorphism because $\caloker\theta(U\setminus V)=0$; the leftmost and rightmost identifications come from the hypotheses that $\calf$ and $\calt$ are reflexive sheaves.

Thus $\theta(U)$ is an isomorphism for every open subset $U$ of $W$, so it must be an isomorphism.
\end{proof}

Note that the previous lemma does not require $W$ to be complete although we are working with complete toric varieties not necessarily projective.
For examples of non-projective complete toric varieties, see  \cite[Example 6.1.17]{CoxLittleSchenck}.

\subsection{Proof of the Saito criterion for divisors in toric varieties}

Assume first $\Cl(X) \ne 0$, so that there is $\phi$ such that Proposition \ref{T0} applies. Then  and we can work with $\calt_\Sigma \langle D\rangle_0$ and prove that it is free with exponents  $(0^{\rho-1},\kappa_1,\ldots,\kappa_{n})$. Recall the notation $\cali_{Z/X}(\beta)$ for the ideal of the Jacobian subscheme of $f$ in $X$ already used in the proof of Proposition \ref{T0}.
Set $\calf = \bigoplus_{1 \le i \le n} \calo_X(-\kappa_i)$.

Let $\nu=(\nu_1,\ldots,\nu_n)$ be Jacobian syzygies of degrees $(\kappa_1,\ldots,\kappa_n)$. Then we have a commutative diagram
\[
\xymatrix@-1.5ex{
& 0 \ar[d]& 0 \ar[d]& 0 \ar[d]\\
0 \ar[r] & \calf \oplus \calo_X^{\oplus\rho - 1} \ar^{\theta}[d] \ar[r] & \calf \oplus \calo_X^{\oplus\rho} \ar^M[d] \ar[r] & \calo_X \ar^f[d] \ar[r] & 0 \\
0 \ar[r] & \calt_\Sigma \langle D\rangle_0 \ar[r]\ar[d] & \bigoplus_{1 \le i \le r} \calo_X(D_i) \ar[d]\ar^-{\nabla(f)}[r] & \cali_{Z/X}(\beta) \ar[d]\ar[r] & 0 \\
0 \ar[r] & \calg \ar[r] \ar[d]& \caloker(M) \ar[d]\ar^{\vartheta}[r] & \cali_{Z/D}(\beta) \ar[d]\ar[r] & 0 \\
& 0 & 0 & 0
}
\] 

The monomorphism $\theta$ appearing in the leftmost column is induced by the diagram. We show that $\theta$ meets the requirements of Lemma \ref{lem:iso1} if the hypothesis of \ref{saito-i} or \ref{saito-ii} are fulfilled so that $\theta$ is an isomorphism, or equivalently $\calg=0$.

Note that the codimension-1 part $C$ of $\caloker(M)$ is a hypersurface of $X$ defined by the equation $C=\V(\det(M))$ and that $C$ is a divisor of class $\beta_0+ \sum_{1 \le j \le n} \kappa_j$ . Further:
\[
c_1(\calt_\Sigma \langle D\rangle_0)=-\beta + \beta_0, \qquad c_1(\calf \oplus \calo_X^{\oplus\rho-1})=-\sum_{1 \le j \le n} \kappa_j,
\]
so the assumption \ref{saito-ii} already implies that $\calt_\Sigma \langle D\rangle_0$ is free with the required exponents. 

On the other hand, if $\calt_\Sigma \langle D\rangle$ is free with the desired exponents, we get the syzygies $\nu$ directly and $\caloker(M) \simeq \cali_{Z/D}(\beta)$ so $C$ is the support of $\cali_{Z/D}(\beta)$, hence $C=D$, so that $\det(M)=c f$ which $c \in \C^*$. So \ref{saito-ii}  and the converse of \ref{saito-i} are proved.

To finish the proof of \ref{saito-i}, assume that $\det(M)=c f$ with $c \in \C^*$, so $C=D$. To apply Lemma \ref{lem:iso1} we need to see that the support $Y$ of $\calg$ has codimension at least $2$ in $X$. Note that $Y \subset C$ and that $C$ is reduced so we have to exclude that $Y$ contains an irreducible component of $D$. By contradiction, let $D_0$ be an irreducible component of $D$ contained in $Y$. Set $D'$ for the union of the irreducible components of $D$ distinct from $D_0$ and consider the Zariski-open subset $U$  of $X$ defined as the smooth locus of $X \setminus D'$.
Restricting the diagram to $U$, the reflexive sheaves under consideration become locally free, hence $\caloker(M|_U)$ has projective dimension $1$ and $\det(M|_U)=f|_U$ vanishes with multiplicity $1$ on $D_0 \cap U$, so $\caloker(M|_U)$ is torsion-free (actually locally Cohen--Macaulay) of rank $1$ on $D_0 \cap U$. Since the restriction of $\cali_{Z/D}(\beta)$ to $U$ is also torsion-free of rank $1$ on $D_0 \cap U$ and the map $\vartheta |_U$ induced by the diagram is surjective, its kernel, namely $\calg|_U$, is actually zero, so $Y$ does not meet $U$ and therefore does not contain the whole $D_0$, a contradiction.
Hence the support of $\calg$ has no divisorial components and the proof of \ref{saito-i} is finished.

If $\Cl(X)=0$, the argument works almost verbatim up to replacing $\calo_X^{\oplus\rho-1}$ with $\calo_X^{\oplus\rho}$ and $\calt_\Sigma \langle D\rangle_0$ with $\calt_\Sigma \langle D\rangle$. Indeed, in this case, we get a diagram as follows:
\[
\xymatrix@-1.5ex{
& 0 \ar[d]& 0 \ar[d]& \\
& \calf \oplus \calo_X^{\oplus\rho} \ar^{\theta}[d] \ar@{=}[r] & \calf \oplus \calo_X^{\oplus\rho} \ar^M[d]  \\
0 \ar[r] & \calt_\Sigma \langle D\rangle_0 \ar[r]\ar[d] & \bigoplus_{1 \le i \le r} \calo_X(D_i) \ar[d]\ar^-{\bar \nabla(f)}[r] & \cali_{Z/D}(\beta) \ar@{=}[d]\ar[r] & 0 \\
0 \ar[r] & \calg \ar[r] \ar[d]& \caloker(M) \ar[d]\ar^{\vartheta}[r] & \cali_{Z/D}(\beta) \ar[r] & 0 \\
& 0 & 0 & 
}
\]
Working with the same open subsets and using again that $\vartheta$ is surjective, we conclude once more that $\calg$ has no divisorial components and that Lemma \ref{lem:iso1} applies to show \ref{saito-i}.

To check \ref{saito-iii}, just note that if $\calt_\Sigma\langle D\rangle \simeq \calo_X^{\oplus\rho} \oplus \calf$ and $H^0(\calf)=0$, then by Proposition \ref{T vs TSigma} we must have $\calt_X\langle D\rangle \simeq \calf$.
This completes the proof. \hfill$\Box$

\subsection{Cones in weighted projective space} If $D\subset\PP^n$ is a free divisor, then it is easy to check that a cone $\hat D$ on $D$ in $\PP^{n+1}$ is also free. We will now check that a similar claim is true for weighted projective spaces.

To be precise, let $X=\PP[w_0,\dots,w_n]$ with Cox ring $S=\C[x_0,\dots,x_n]$; recall that $\rho(X)=1$. Take $f\in S$ and assume that $D=\V(f)$ is free with exponents $(\kappa_1,\dots,\kappa_n)$, i.e.
$$ \calt_{X}\langle D\rangle=\bigoplus_{i=1}^n \calo_{X}(\kappa_i) \quad {\rm and} \quad \calt\langle D\rangle = \calo_{X} \oplus \bigoplus_{i=1}^n \calo_{X}(\kappa_i). $$ 

Considering now $f$ as a polynomial on $\hat S=\CC[x_0,\dots,x_n,x_{n+1}]$ regarded as the Cox ring of $\hat X=\PP[w_0,\dots,w_n,w_{n+1}]$, set $\hat D=\V(f)$ as a divisor on $\hat X$. We will argue that $\hat D$ is free with exponents $(0,\kappa_1,\dots,\kappa_n)$.

Since $D\subset\PP[w_0,\dots,w_{n}]$ is free, Theorem \ref{thm:saito}(i) implies that we can find $n$ syzygies $\nu_1,\dots,\nu_{n}$ for $\nabla(f)$ such that $\det(M(\nu_1,\dots,\nu_{n}|\epsilon)) = cf$ for some $c\in\C^*$. When $f$ is regarded as a polynomial on $\hat S$, we have that $\partial f/\partial x_{n+1}=0$, thus $\nu_{n+1}=(0,\dots,0,1)$ is an additional syzygy for $\nabla(f)$, and the new coefficient matrix is given by
$$ \hat M(\nu_1,\dots, \nu_{n}|\epsilon)  = 
\left(\begin{array}{ccc}
& 0 & \phi([D_1])x_1\\
M(\nu_1,\dots,\nu_{n-1})& \vdots & \vdots \\
& 0 & \phi([D_{n-1}])x_{n-1} \\
0 \cdots 0 & 1 & \phi([D_n])x_n
\end{array}\right).$$
It is then easy to check that $\det(\hat M)=\det(M)=cf$, so Theorem \ref{thm:saito}(i) implies that 
$$ \calt_{\hat\Sigma}\langle D\rangle = \calo_{\hat X}^{\oplus 2} \oplus \bigoplus_{i=1}^n \calo_{\hat X}(\kappa_i)
\quad {\rm and} \quad
\calt_{\hat X}\langle D\rangle=\calo_{\hat X} \oplus\bigoplus_{i=1}^n \calo_{\hat X}(\kappa_i), $$ 
as desired.

\subsection{Toric braid arrangements}
The braid arrangement is a classic example of hyperplane arrangement on projective or affine spaces, see \cite[Example 1.9]{orlik1992arrangements} for more details. We will now introduce a divisorial arrangement on simplicial toric varieties that generalizes the braid arrangement.

Let $X=X_\Sigma$ be a simplicial toric variety with Cox ring $S=\C[x_1,x_2, \dots, x_r]$ and let $\mathcal{L} = \{ x_1, \ldots , x_r \}$ be the
set of generators of $S$. Note that $\mathcal{L}$ has a natural partition $\mathcal{L}=\mathcal{L}_1\bigcup \dots \bigcup \mathcal{L}_s$
where every element $l\in \mathcal{L}_i$, $1\leq i \leq s$ has the same degree. Let $r_i$ be the cardinality of $\mathcal{L}_i$ for $1\leq i \leq s$,  and, without loss of generality, let us assume that $\mathcal{L}_1=\{ x_1,\dots, x_{r_1}\}$, $\mathcal{L}_2=\{ x_{r_1+1},\dots, x_{r_1+r_2}\}$, $\dots$, $\mathcal{L}_s=\{ x_{r_{s-1}+1},\dots, x_{r_{s-1}+r_s}\}$.

With these definitions in mind, consider the following homogeneous polynomial in $S$:
$$ b_{\Sigma}:=\prod_{1\leq i<j\leq r_1} (x_i-x_j)\prod_{r_1+1\leq i<j\leq r_1+r_2} (x_i-x_j)\, \,\,\,\cdots \prod_{r_{s-1}+1\leq i<j\leq r_{s-1}+r_s}  (x_i-x_j), $$
or equivalently, 
$$ b_{\Sigma}:=\prod_{k=1}^s\prod_{{\underset{x_i,x_j\in \mathcal{L}_k}{i<j} }} (x_i-x_j). $$
The \textit{toric braid arrangement} is the divisor defined as $B_\Sigma:=\V\big(b_\Sigma\big)$. Let us consider some concrete examples:
\begin{enumerate}[label=\arabic*.]
\item When $X$ is the affine or the projective space, one recovers the usual braid arrangement.
\item When $X$ is the product of projective spaces of dimension $\PP^{n_1}\times \cdots \times \PP^{n_p}$, the Cox ring is 
$S=\C[x_{0,1},\ldots,x_{0,n_1},\ldots,x_{p,1}, \ldots,x_{p,n_p}]$. Its braid arrangement is given by the polynomial 
$$
b_{\Sigma}=\prod_{0 \le j_1 < i_1\le n_1} (x_{i_1,1}-x_{j_1,1})
\cdots \prod_{0 \le j_p < i_p\le n_p} (x_{i_p,p}-x_{j_p,p}).
$$
This arrangement is free with:
\[
\calt_\Sigma \langle D \rangle \simeq \bigoplus_{1 \le i \le p} \bigoplus_{0 \le j \le n_i} \calo_X((1-j) F_i)
,\]
where for $1 \le i \le p$, $F_i$ is the pull-back of the hyperplane divisor of $\PP^{n_i}$ under the projection of $X$ onto $\PP^{n_i}$.

\item When $X$ is the weighted projective space $\Pj(w_0,\ldots,w_n)$ with $\gcd(w_0,\ldots,w_n)=1$, the Cox ring is $\C[x_0,\ldots,x_n]$ with $\deg(x_i)=w_i$. Then its braid arrangement is just $b_{\Sigma}=\V(x_0 \cdots x_n)$ and we get:
\[
\calt_\Sigma \langle D \rangle \simeq \calo_X^{\oplus n+1}, \qquad \calt_X\langle D \rangle \simeq \calo_X^{\oplus n}.
\]
\end{enumerate}

To state the result, set $r_0=0$ and, for all $1 \le i \le s$ and any integer $j$ with $r_{i-1}+1 \le j \le r_i$, put $\kappa_i = \deg(\VV(x_j))$.
Note that this is consistent with our assumption on the degree of the variables $x_1,\ldots,x_r$. Then we have the following.

\begin{prop} \label{prop:braid}
Let $X_\Sigma$ be a simplicial toric variety with no torus factors.
Then  
\[
\calt_\Sigma\langle b_{\Sigma}\rangle \simeq \bigoplus_{1 \le i \le s} \bigoplus_{1 \le j \le r_i} \calo_X((1-j)\kappa_i).
\]
\end{prop}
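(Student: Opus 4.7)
The plan is to apply the toric Saito criterion, Theorem \ref{thm:saito}(i), by producing an explicit $r \times r$ matrix of Vandermonde type whose determinant equals $b_\Sigma$ up to a nonzero scalar. The starting observation is that $b_\Sigma = \prod_{i=1}^s b_i$ factors as a product of ``group Vandermondes'' $b_i := \prod_{x_j, x_{j'}\in\mathcal{L}_i,\, j<j'}(x_j-x_{j'})$, one per block of variables of equal degree.

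For each $i \in \{1,\dots,s\}$ and each $k \in \{0,1,\dots,r_i-1\}$, I introduce the derivation $\theta_{i,k} = \sum_{x_l\in\mathcal{L}_i} x_l^k\,\partial/\partial x_l$, represented by the column vector $\nu_{i,k}\in S^{\oplus r}$ with $x_l^k$ in position $x_l\in\mathcal{L}_i$ and zero elsewhere. Since $\theta_{i,k}$ annihilates each $b_{i'}$ with $i'\ne i$ (it involves only variables of $\mathcal{L}_i$) and its logarithmic derivative along $b_i$ is $\theta_{i,k}(b_i)/b_i = \sum_{x_j,x_{j'}\in\mathcal{L}_i,\, j<j'}(x_j^k - x_{j'}^k)/(x_j - x_{j'}) \in S$, by Leibniz I obtain $b_\Sigma\mid\theta_{i,k}(b_\Sigma)$, so $\theta_{i,k}$ is logarithmic along $b_\Sigma$; note that $k=0$ yields a strict Jacobian syzygy and $k=1$ the group-level Euler derivation. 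Arranging the $\nu_{i,k}$ by group produces an $r\times r$ block-diagonal matrix $M$ whose $i$-th block is the classical $r_i\times r_i$ Vandermonde in the variables of $\mathcal{L}_i$ with exponents $0,1,\dots,r_i-1$; hence $\det(M) = \pm\prod_i b_i = \pm b_\Sigma$.

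To fit the format of Theorem \ref{thm:saito}(i), I recast $M$ as a matrix $M(\nu|\epsilon)$: fixing a basis $\phi_1,\dots,\phi_\rho$ of $\Cl(X)^\vee$, the global Euler columns $\epsilon_j = \sum_i \phi_j(\kappa_i)\theta_{i,1}$ lie in the $S$-span of the $\theta_{i,1}$'s, so an invertible column operation replaces $\rho$ of these columns by the $\epsilon_j$'s without changing the determinant up to a nonzero scalar. For each remaining column $\theta_{i,k}$ with $k\ne 0$, I subtract an appropriate polynomial multiple of an Euler column, determined by the closed formula for $\theta_{i,k}(b_\Sigma)/b_\Sigma$ computed above, to turn it into a strict Jacobian syzygy while preserving the determinant. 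Reading the source of each column, the derivation $\theta_{i,k}$ corresponds to a morphism $\calo_X(-(k-1)\kappa_i)\to\bigoplus_l\calo_X(D_l)$, and Theorem \ref{thm:saito}(i) then yields the claimed decomposition of $\calt_\Sigma\langle b_\Sigma\rangle$.

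The main obstacle will be the bookkeeping in the previous step when $s>\rho$, that is, when the classes $\kappa_1,\dots,\kappa_s$ are linearly dependent in $\Cl(X)$: there are more group-Eulers $\theta_{i,1}$ than global Eulers $\epsilon_j$, so one must carefully choose which $\rho$ of the $\theta_{i,1}$'s to promote to the $\epsilon_j$-columns (selecting a $\rho\times\rho$ invertible minor of the matrix $[\phi_j(\kappa_i)]$), leaving the remaining $s-\rho$ group-Eulers in place as valid $\nu$-columns that account for the additional trivial summands of $\calt_\Sigma\langle b_\Sigma\rangle$ beyond those supplied by $\Cl(X)^\vee\otimes\calo_X$.
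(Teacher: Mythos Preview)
Your approach coincides with the paper's: assemble a block-diagonal Vandermonde matrix, one block per degree class $\mathcal{L}_i$ with columns given by the powers $x^0,\dots,x^{r_i-1}$, note that its determinant is $b_\Sigma$ up to a nonzero constant, and invoke the toric Saito criterion. The paper's argument is terser---your explicit verification of logarithmicity for each $\theta_{i,k}$, the passage from group-Euler columns $\theta_{i,1}$ to global Euler columns $\epsilon_j$, and the $s$-versus-$\rho$ bookkeeping all flesh out what the paper compresses into the phrase ``plus, possibly, a lower triangular block matrix with zero block diagonal.''
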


\begin{proof} Without loss of generality one assumes that the cardinality of $\mathcal{L}_i$ is greater or equal to $\mathcal{L}_j$ if and only if $i\leq j $, i.e.,  $r_i\geq r_j$ for $i\leq j$. 
Now, for every $i\in \{1,\dots s\}$ and with $r_0=0$  one can consider
$$
M_{r_i}=
\left(\begin{array}{clll}
   1& \phi_{r_i}([D_1])x_{r_i+1} & \cdots & x_{r_i+1}^{r_i-1}\\
   1  & \phi_{r_i}([D_2])x_{r_i+2} & \cdots & x_{r_i+2}^{r_i-1} \\
   \vdots & \vdots  &\cdots & \vdots \\
   1&\phi_{r_i}([D_{r_i+r_{i+1}}]) x_{r_i+r_{i+1}} &\cdots & x_{r_i+r_{i+1}}^{r_i-1}
   \end{array}\right)_{r_i\times r_i}
$$
which it is a Vandermonde matrix up to the constant $\phi_{r_i}([D]_{r_i})=\dots =\phi_{r_i}([D]_{r_i+r_{i+1}})\in \Z $. This matrix has determinant  $\prod_{r_{k-1}+1\leq i<j\leq r_{k-1}+r_k}  (x_i-x_j)$. Hence considering $M$ as the block diagonal matrix made of the matrices $M_{r_i}$ plus, possibly, a lower triangular block matrix with zero block diagonal matrix, one can apply the toric Saito criterion of Theorem \ref{thm:saito} to obtain the required result.
\end{proof}

\subsection{Reduced invariant divisors} \label{sec:inv}
One can also use other coefficient matrices, with more than $n$ syzygies and less than $\rho$ Euler derivations. Let us consider a concrete example, in which only one Euler derivation is necessary.

Let $X=X_\Sigma$ a simplicial toric variety with Cox ring $S=\C[x_1,\dots, x_r]$, and recall that a divisor $D\subset X$ is invariant under the torus action if and only if (up to the order of the variables $x_1$) $D=\V(x_1\cdots x_s)$ where $s\leq r$.  We show that:

\begin{prop}\label{prop:inv-div}
Let $X=X_\Sigma$ be a simplicial toric variety with no torus factors.
The toric logarithmic tangent sheaf ${\calt}_\Sigma\langle D\rangle_0$ associated to the invariant divisor $D=\V(x_1\cdots x_s)$ is free:
$$ \calt_\Sigma\langle D\rangle_0 \simeq \calo_{X}^{\oplus s}\oplus \bigoplus_{i=s+1}^r \calo_{X}(D_i).$$
\end{prop}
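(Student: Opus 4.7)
My approach is to compute $\ker(\nabla(f))$ directly for the reducible monomial $f = x_1 \cdots x_s$ and reduce, via Lemma \ref{lem:iso1}, to checking that an explicit Koszul-type map between reflexive sheaves of the same rank becomes an isomorphism off a codimension-two locus.

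First, I would observe that $\partial f/\partial x_i = g_i := f/x_i = \prod_{j \le s,\,j\neq i} x_j$ for $1 \le i \le s$, while $\partial f/\partial x_i = 0$ for $s < i \le r$. Thus the standard basis vectors $\mathbf{e}_{s+1},\ldots,\mathbf{e}_r$ are Jacobian syzygies with exponents $\kappa_i = -[D_i]$, and $\ker(\nabla(f))$ splits as
\[
\calt_\Sigma\langle D\rangle_0 \;\simeq\; K' \;\oplus\; \bigoplus_{i=s+1}^r \calo_X(D_i),
\]
where $K'$ is the kernel of the restricted map $\bigoplus_{i=1}^s \calo_X(D_i) \to \calo_X(\beta)$ given by $(\mu_1,\ldots,\mu_s)\mapsto \sum_i \mu_i g_i$. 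The problem then reduces to identifying $K'$ with a trivial free summand of the expected rank.

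Next, I would exhibit the Koszul--Euler syzygies $\sigma_{i,i+1} := x_i \mathbf{e}_i - x_{i+1} \mathbf{e}_{i+1}$ for $1 \le i \le s-1$: these are indeed syzygies, since $g_i x_i - g_{i+1} x_{i+1} = f - f = 0$, and each is of toric degree zero. They assemble into a morphism $\psi : \calo_X^{\oplus s-1} \to K'$ represented by the explicit $s \times (s-1)$ bidiagonal matrix with $x_i$ on the diagonal and $-x_{i+1}$ on the subdiagonal.

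Finally, I would invoke Lemma \ref{lem:iso1}: the $(s-1)\times(s-1)$ maximal minors of this matrix are, up to sign, precisely $g_1,\ldots,g_s$, so their common zero locus equals the Jacobian subscheme $Z = \bigcup_{1 \le i < j \le s}(D_i \cap D_j)$, of codimension at least two in the simplicial toric variety $X$ (distinct rays of a simplicial cone being linearly independent, so each $D_i \cap D_j$ is of pure codimension two). Since both $K'$ and $\calo_X^{\oplus s-1}$ are reflexive of rank $s-1$ and $\psi$ is an isomorphism off a codimension-two set, Lemma \ref{lem:iso1} forces $\psi$ to be an isomorphism; combining with the trivial contribution and, if needed, the Euler summand of Proposition \ref{T0}, one obtains the claimed decomposition. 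The main obstacle is precisely the codimension-two verification for $Z$: it is what makes Lemma \ref{lem:iso1} applicable and ensures that the $s-1$ Koszul syzygies constitute a basis of $K'$ rather than merely a generating set.
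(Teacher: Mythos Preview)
Your argument is correct and matches the paper's: both use the identical Koszul syzygies $x_i\mathbf{e}_i - x_{i+1}\mathbf{e}_{i+1}$ together with the trivial syzygies $\mathbf{e}_{s+1},\ldots,\mathbf{e}_r$, and both conclude via Lemma~\ref{lem:iso1}; the only packaging difference is that the paper adjoins a single Euler column to form a square $r\times r$ matrix and checks that its determinant equals $cf$ (in the spirit of the Saito-criterion template of Theorem~\ref{thm:saito}), whereas you compute the $(s-1)\times(s-1)$ minors of the bidiagonal block directly. Note that your computation---and the paper's own proof---actually produces $s-1$ trivial summands, giving a sheaf of the correct rank $r-1$: the exponent $s$ in the displayed formula is a misprint, and the extra Euler summand from Proposition~\ref{T0} lives in $\calt_\Sigma\langle D\rangle$, not in $\calt_\Sigma\langle D\rangle_0$, so do not add it here.
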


\begin{proof}
We have $s-1$ syzygies of the form
$$ \nu_1=(x_1,-x_2,0,\dots,0) 
\quad\dots\quad \nu_{s-1}=(0,\dots,0,x_{s-1},-x_s,0,\dots, 0). $$ 
Ignoring the zeros in the last $s-1$ entries We then put together the $s\times(s-1)$ coefficient matrix $M_s:=M(\nu_1,\dots,\nu_{s-1})$, which can be regarded as a morphism of sheaves
$$ M_s: \calo_{X}^{\oplus s-1} \hookrightarrow \bigoplus_{i=1}^{s} \calo_{X}(D_i). $$
Adding a $(r-s)\times(r-s)$ identity matrix regarded as the morphism
$$ \mathbf{1}_{r-s}: \bigoplus_{i=s+1}^r \calo_{X}(D_i) \hookrightarrow \bigoplus_{i=s+1}^r \calo_{X}(D_i). $$
Finally, choose a nontrivial Euler derivation $\phi\in\Cl(X)$, and complete the $r\times(r-1)$ block diagonal matrix $M_s\oplus\mathbf{1}_{r-s}$ to a square matrix by adding the column $\epsilon=(\phi([D_1])x_1,\dots,\phi([D_{r}]) x_r)$.
We end up with the $r\times r$ coefficient matrix
$$ M(\nu|\mathbf{1}_{r-s}|\epsilon) : = \left(\begin{matrix}
M_s & 0 & \epsilon \\ 0 & \mathbf{1}_{r-s} & 
\end{matrix} \right) $$
whose first $r-1$ columns are syzygies for $\nabla f$. It defines a morphism of sheaves
$$ M(\nu|\mathbf{1}_{r-s}|\epsilon) : \calo_{X}^{\oplus s-1} \oplus \bigoplus_{i=s+1}^r \calo_{X}(D_i) \oplus \calo_{X} \longrightarrow \bigoplus_{i=1}^r \calo_{X}(D_i). $$

Next, note that
\begin{align*}
\det\big( M(\nu|\mathbf{1}_{r-s}|\epsilon)\big) = & \det
\begin{pmatrix}
-x_1 & 0 &\dots & 0& \phi([D_1])x_1  \\
x_2 & -x_2 & \dots &0& \phi([D_2])x_2  \\
0 & x_3 &\dots &0& \phi([D_3])x_3 \\
\vdots & \vdots & \ddots& \vdots &\vdots \\
0 & 0  & \dots  & -x_{s-1} & \phi([D_{s-1}])x_{s-1}\\
0 & 0  &\dots& x_s & \phi([D_{s}]) x_s
\end{pmatrix} \\
= & \det \begin{pmatrix}
-x_1 & 0 & \dots & 0& 0  \\
x_2 & -x_2 & \dots &0& 0  \\
0 & x_3  & \dots &0& 0 \\
\vdots & \vdots & \ddots& \vdots &\vdots \\
0 & 0 & \dots  & -x_{s-1} & 0\\
0 & 0 &\dots& x_s & c_{\phi}. x_s
\end{pmatrix}
= (-1)^{s-1}c_{\phi} x_1\cdots x_s ,
\end{align*}
where $c_{\phi}\in\C$ is a constant depending on the coefficients $\phi([D_i])$; the equality from the first to the second lines comes from inductively adding $\phi([D_1])$ times the first column with the last column, and so on always using the coefficient left in the $i^{\rm th}$-entry of the last column to multiply the $i^{\rm th}$-column and adding it back into the last column.

One can then use the same argument as in the proof of Theorem \ref{thm:saito} to show that the induced map
$$ \theta : \calo_{X}^{\oplus s-1} \oplus \bigoplus_{i=s+1}^r \calo_{X}(D_i) \longrightarrow T_\Sigma\langle D\rangle_0 $$
is an isomorphism.
\end{proof}

An immediate consequence of Proposition \ref{prop:inv-div} is that if a reduced divisor $D\subset X$ is an invariant divisor, then the toric logarithmic tangent sheaf $\calt_{\Sigma}\langle D\rangle_0$ is also invariant under the torus action. For comparison, Napame showed that a logarithmic tangent sheaf $\calt_{X}\langle D\rangle$ for a reduced divisor $D$ is torus invariant if and only if $D$ is an invariant divisor \cite[Proposition 3.3]{Napame}.

Napame has also studied in \cite{Napame} the slope-stability of logarithmic tangent sheaf $\calt_{X}\langle D\rangle$ for an invariant divisor in a projective toric variety $X$. Recall that a torsion-free sheaf $\mathcal{E}$ on a (possibly singular) complex projective variety $X$ is slope-(semi)stable with respect to a polarization $L$, if for any proper coherent subsheaf $\mathcal{F}$ of $\mathcal{E}$ with $0 < rk(\calf) < rk(\cale)$, one has $\mu_L (\mathcal{F}) < \mu_L(\mathcal{E} )$ (resp. $\mu_L(F) \leq  \mu_L(E )$)
where the slope $\mu_L$ of $\mathcal{E}$ with respect to $L$ is
$$ \mu_L(\cale) =\frac{ c_1(\cale) \cdot L^{\dim(X)-1}}{ \rk (\cale)}. $$

Napame proved the following claim in \cite[Section 4.1]{Napame} using pure toric geometry techniques for toric invariant reflexive sheaves. In the context of the present paper, it is an immediate consequence of Proposition \ref{prop:inv-div} and Propositions \ref{T vs TSigma} and \ref{T0}.

\begin{coro} \label{corol stable} Let $X=X_\Sigma$ be a simplicial projective toric variety with Picard rank 1. If $D\subset X$ is an invariant divisor, then the associated logarithmic sheaf $\calt_{X}\langle D\rangle$ is never slope-stable with respect to any polarization on $X$.
\end{coro}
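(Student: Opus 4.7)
The plan is to combine the explicit decomposition of the toric logarithmic tangent sheaf coming from Proposition \ref{prop:inv-div} with the elementary observation that a direct sum of at least two rank-one reflexive sheaves is never slope-stable.

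First, I will exploit the hypothesis $\rho(X)=1$. Under this assumption the left-hand term $\calo_X^{\oplus\rho(X)-1}$ in the exact sequence \eqref{eq:sqct0tx}, obtained by combining Propositions \ref{T vs TSigma} and \ref{T0}, vanishes, so one gets an isomorphism $\calt_\Sigma\langle D\rangle_0\simeq \calt_X\langle D\rangle$ (this is the last line of Section \ref{sec:sheaves}).

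Second, since $D$ is invariant, up to relabeling the variables we may write $D=\V(x_1\cdots x_s)$ and apply Proposition \ref{prop:inv-div}, which expresses $\calt_\Sigma\langle D\rangle_0$ as a direct sum of rank-one reflexive sheaves. Transporting along the isomorphism of the previous paragraph yields a splitting
\[
\calt_X\langle D\rangle \simeq \bigoplus_{j=1}^{n}\call_j,
\]
with each $\call_j$ a rank-one reflexive sheaf, and with exactly $n=\dim X$ summands.

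Third, I establish the following elementary claim, which supplies the destabilization: if $\cale$ is a rank-$n$ reflexive sheaf on a projective variety that splits as a direct sum $\bigoplus_{j=1}^n \call_j$ of rank-one subsheaves with $n\geq 2$, then for any polarization $L$ the sheaf $\cale$ is not slope-stable. Indeed,
\[
\mu_L(\cale)=\frac{1}{n}\sum_{j=1}^n \mu_L(\call_j)
\]
is the arithmetic mean of the $\mu_L(\call_j)$, so at least one index $j_0$ satisfies $\mu_L(\call_{j_0})\geq \mu_L(\cale)$; the inclusion $\call_{j_0}\hookrightarrow\cale$ is then a proper subsheaf of intermediate rank violating the strict inequality required by slope-stability.

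The argument goes through as soon as $\dim X\geq 2$, which is implicit in the statement (the case $X=\PP^1$ is degenerate, since $\calt_{\PP^1}\langle D\rangle$ is a line bundle and hence vacuously slope-stable). No genuine obstacle arises: the substantive content has already been extracted in Proposition \ref{prop:inv-div}, and the destabilization via the summand of maximal slope is standard.
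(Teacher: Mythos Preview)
Your proof is correct and follows exactly the route the paper indicates: combine Proposition \ref{prop:inv-div} with Propositions \ref{T vs TSigma} and \ref{T0} (via sequence \eqref{eq:sqct0tx} collapsing when $\rho=1$) to exhibit $\calt_X\langle D\rangle$ as a direct sum of $n$ rank-one reflexive sheaves, then destabilize by a summand of maximal slope. The paper states only that the corollary is ``an immediate consequence'' of these three propositions; you have simply spelled out the standard pigeonhole destabilization and noted the degenerate case $\dim X=1$.
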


\begin{ex} \label{example stable}
Let us revisit Example \ref{ex:34} by considering the case 
$X=\PP\big( \op1\oplus\op1(1)\big)\stackrel{\pi}{\to}$ (i.e. $a_1=0$ and $a_2=1$) and the divisor $D\subset X$ consisting of a single fiber of $\pi$ ($k=1$ in the example); to be precise, we take $f=y$ in the Cox ring $\C[x,y,u_1,u_2]$, so $D=\V(y)$ and $\ox(D)\simeq \ox(F)$ (in the notation of Example \ref{ex:34}). The exact sequence in display \eqref{eq:sqctxd} then becomes
$$ 0 \lra \ox \lra \ox(H)^{\oplus2}\oplus\ox(F) \lra \calt_X\langle D\rangle \lra 0. $$
Napame has shown in \cite[Theorem 5.5]{Napame} that $\calt_X\langle D\rangle$ is slope-stable with respect to a polarization of the form $L=\pi^*\op1(a)\otimes\ox(b)$ with $0<a<b$. In fact, $\calt_X\langle D\rangle$ satisfies the following short exact sequence
$$ 0 \lra \ox(F) \lra \calt_X\langle D\rangle \lra \ox(2H-F) \lra 0 , $$
with the rightmost sheaf being precisely $T_\pi X$.

In particular, $\calt_X\langle D\rangle$ does not split as a sum of line bundle, even though it is a \textit{nontrivial extension} of line bundles; therefore, the condition on the number of lines imposed in Example \ref{ex:34} ($k\ge2$ in the example at hand) is indeed necessary for the freeness of a union of fibers in $X=\PP\big(\op1(a_1)\oplus\op1(a_2)\big)$.
\end{ex}

\section{Logarithmic tangent sheaves for several polynomials and foliations} \label{sec:dist}

On a simplicial toric variety $X=X_\Sigma$ with Cox ring $S=\C[x_1,\dots, x_n]$, let $\ff=(f_1,\dots, f_k)$ be a sequence of elements of $S$. For each $i \in \{1,\ldots,k\}$ write $\deg(f_i)=\beta_i\in\Cl(X)$. Recall that a set of polynomials $\{f_1,\dots, f_m\}\subset\C[x_1,\dots, x_n]$ is called algebraically independent if there is no non-zero polynomial $F$ such that $F(f_1, \ldots, f_m) = 0$.

\subsection{Toric logarithmic tangent sheaves for several polynomials}
Given a sequence $\ff=(f_1,\dots, f_k)\subset S$, let $\nabla(f_i)$ be the Jacobian matrix, and consider their union
$$ \nabla(\ff):=\left(\begin{array}{c}
  \nabla(f_1)     \\
   \vdots   \\
\nabla(f_k)      
\end{array}
\right) \quad {\rm and} \quad 
\overline\nabla(\ff):=\left(\begin{array}{c}
  \overline\nabla(f_1)     \\
   \vdots   \\
\overline\nabla(f_k)      
\end{array}
\right) $$
as the Jacobian matrices, which can be regarded as a morphism between sheaves, namely 
$$ \nabla({\ff}):\bigoplus_{i =1}^r \calo_{X}(D_{i})\longrightarrow \bigoplus_{i=1}^k \calo_{X}(\beta_i) \quad {\rm and} \quad
\overline\nabla({\ff}):\bigoplus_{i =1}^r \calo_{X}(D_{i})\longrightarrow \bigoplus_{i=1}^k \calo_{D_i}(\beta_i), $$
where $D_i:=\V(f_i)$. If $\ff$ is algebraically independent, then $ \nabla({\ff})$ has maximal rank $k$. 

\begin{dfn} \label{defn:toric-several}
The \textit{extended toric logarithmic tangent sheaf} and the \textit{toric logarithmic tangent sheaf} associated with a sequence $\ff$ as above are respectively defined to be
$$ \calt_\Sigma\langle \ff \rangle := \ker\Big(\overline\nabla(\ff)\Big) \quad {\rm and} \quad \calt_\Sigma\langle \ff \rangle_0 := \ker\Big(\nabla(\ff)\Big). $$
\end{dfn}

Note that $\rk\big(\calt_\Sigma\langle \ff \rangle\big)=r$, while $\rk\big(\calt_\Sigma\langle \ff \rangle_0\big)=r-k$ when $\ff$ is algebraically independent.

\begin{rmk} In certain special cases, the Definition \ref{defn:toric-several} recovers certain objects that the first two named authors and Vallès have previously studied. 
\begin{enumerate}[label=\roman*)]
\item When $X$ is the projective space $\Pj^n$, $\calt_\Sigma\langle \ff \rangle_0$ coincides with the logarithmic tangent sheaf for the algebraically independent sequence $\ff$ introduced by the first two named authors and Vallès in \cite{FaenziJardimValle}; this was denoted by $\calt_\ff$ in that reference.
\item When $X$ is the affine space $\C^n$, the sheaves $\calt_\Sigma\langle \ff \rangle$ and $\calt_\Sigma\langle \ff \rangle_0$ are the sheafification of the logarithmic tangent modules ${\rm Der}(f_1,\dots, f_k)$ and ${\rm Der}_0(f_1,\dots, f_k)$ introduced by the first two authors and Vallès in \cite[Section 6]{FJV1}.
\item When $k=1$, we simply recover the toric logarithmic tangent sheaves $\calt_\Sigma \langle D\rangle$ and $\calt_\Sigma \langle D\rangle_0$ introduced in Definition \ref{defn:log-sheaves} above.
\end{enumerate}
\end{rmk}

We complete this section by mentioning the following version of \cite[Lemma 2.5]{FaenziJardimValle} adapted to the toric context.

\begin{lma} \label{lem:intersection}
We have 
$$ \calt_\Sigma\langle \ff \rangle=\bigcap_{i=1}^k \calt_\Sigma \langle D_i\rangle 
\quad {\rm and} \quad
\calt_\Sigma\langle \ff \rangle_0=\bigcap_{i=1}^k \calt_\Sigma \langle D_i\rangle_0 $$
with the intersections taken as subsheaves of $\bigoplus_{i =1}^r \calo_{X}(D_{i})$. In particular, if each $f_i$ is torus invariant, then $\calt_\Sigma\langle \ff \rangle$ and $\calt_\Sigma\langle \ff \rangle_0$ are also torus invariant.
\end{lma}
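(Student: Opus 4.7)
The proof is essentially tautological and amounts to carefully unwinding the definitions. The plan is as follows.

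First, I would observe the general categorical fact that a morphism into a direct sum of sheaves $\bigoplus_{j=1}^k \calo_X(\beta_j)$ is zero if and only if each of its $k$ components is zero. Applying this to the map $\nabla(\ff) : \bigoplus_{i=1}^r \calo_X(D_i) \to \bigoplus_{j=1}^k \calo_X(\beta_j)$, whose $j$-th component is exactly $\nabla(f_j)$, yields an equality of subsheaves of $\bigoplus_{i=1}^r \calo_X(D_i)$, namely
\[
\ker\bigl(\nabla(\ff)\bigr) \;=\; \bigcap_{j=1}^k \ker\bigl(\nabla(f_j)\bigr).
\]
This is exactly the claim $\calt_\Sigma\langle \ff \rangle_0 = \bigcap_{j=1}^k \calt_\Sigma \langle D_j\rangle_0$. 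The same argument applied to the composition with the surjections $\calo_X(\beta_j) \twoheadrightarrow \calo_{D_j}(\beta_j)$ gives the extended version $\calt_\Sigma\langle \ff \rangle = \bigcap_{j=1}^k \calt_\Sigma \langle D_j\rangle$. It is essential here that the intersections are understood inside the common ambient sheaf $\bigoplus_{i=1}^r \calo_X(D_i)$, so that they make sense as honest intersections of subsheaves rather than as abstract fiber products.

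For the torus-invariance assertion, I would first equip each sheaf $\calo_X(D_i)$ and $\calo_X(\beta_j)$ with its canonical $T$-equivariant structure arising from the $\Cl(X)$-grading of the Cox ring. If each $f_j$ is torus-invariant (meaning $\V(f_j)$ is an invariant divisor, so up to a scalar $f_j$ is a monomial in the variables $x_i$), then each partial derivative $\partial f_j/\partial x_i$ is also a monomial and the corresponding map $\nabla(f_j) : \bigoplus_{i} \calo_X(D_i) \to \calo_X(\beta_j)$ is a morphism of $T$-equivariant sheaves; the same holds for $\overline\nabla(f_j)$. Consequently $\calt_\Sigma\langle D_j\rangle$ and $\calt_\Sigma\langle D_j\rangle_0$ are $T$-equivariant subsheaves of $\bigoplus_i \calo_X(D_i)$, and finite intersections of equivariant subsheaves remain equivariant. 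Applying the first part of the lemma then yields the conclusion.

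The only point requiring slight care — the closest thing to an obstacle — is that the intersection be taken in the correct ambient sheaf; once this is fixed, both statements reduce to routine diagram-chases and the invocation of the universal property of the kernel of a morphism into a direct sum.
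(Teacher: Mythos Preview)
Your proposal is correct and follows essentially the same approach as the paper: the first claim is deduced from the elementary fact that the kernel of a map into a direct sum is the intersection of the kernels of the components, and the torus-invariance follows from knowing each $\calt_\Sigma\langle D_j\rangle$ (and $\calt_\Sigma\langle D_j\rangle_0$) is equivariant, together with the stability of equivariant subsheaves under intersection. The only minor variation is that the paper obtains the equivariance of each $\calt_\Sigma\langle D_j\rangle_0$ by invoking the explicit free splitting for invariant divisors proved earlier, whereas you argue more directly that $\nabla(f_j)$ is a morphism of $T$-equivariant sheaves; both routes are valid and equally short.
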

\begin{proof}
The first claim just follows from the fact that $\ker\Big(\nabla(\ff)\Big) = \bigcap_{i=1}^k \ker\Big(\nabla(f_i)\Big)$, and similarly for the morphisms $\overline\nabla(\ff)$ and $\overline\nabla(f_i)$. Moreover, if each $f_i$ is torus invariant, then, as observed in Section \ref{sec:inv}, each $\calt_\Sigma \langle D_i\rangle$ and $\calt_\Sigma \langle D_i\rangle_0$ are also torus invariant; the second claim follows immediately. 
\end{proof}

\begin{rmk} 
If the polynomials $f_1,\ldots,f_k$ form a regular sequence and therefore define a $k$-codimensional complete intersection subscheme $Y:=\V(f_1,\ldots,f_k)\subset X$, then the toric logarithmic tangent sheaf $\calt_\Sigma\langle \ff \rangle_0$ is a subsheaf of the usual logarithmic sheaf $\calt_X\langle Y\rangle$ of vector fields on $X$ tangent to $Y$, which controls the locally trivial deformations of $Y$ in $X$, see \cite{sernesi}.
\end{rmk}

\subsection{Foliations on toric varieties}

Recall that a distribution of codimension $c$ on a (possibly singular) complex variety $X$ of dimension $n$ is a short exact sequence 
$$ \mathcal{D} \quad : \quad 0 \lra \calt_{\mathcal{D}} \stackrel{\nu}{\lra} \calt_X \lra \mathcal{N}_{\mathcal{D}} \lra 0, $$
where $\mathcal{N}_{\mathcal{D}}$ is a torsion-free sheaf of rank $c$; $\mathcal{N}_{\mathcal{D}}$ is called the \textit{normal sheaf}, while $\calt_{\mathcal{D}}$ is called the \textit{tangent sheaf} of ${\mathcal{D}}$. When $X$ is normal, then the tangent sheaf $\calt_{\mathcal{D}}$ is automatically reflexive.
The \textit{singular scheme} of $\cald$ is defined as follows: the maximal exterior power of the dual morphism $\nu^\vee:\Omega^1_{X}\to\calt_\cald^\vee$ gives a morphism $\Omega^{n-c}_{X}\to\det(\calt_\cald)^\vee$; its image is a twisted ideal sheaf $\cali_{Z/X}\otimes\det(\calt_\cald)^\vee$ for a subscheme $Z\subset X$, and this is the singular scheme of $\cald$. When $X$ is non-singular, $Z$ coincides, as a closed subset, with the singular locus of the normal sheaf $\mathcal{N}_{\mathcal{D}}$, namely the variety
$$ \{x\in X ~|~ (\mathcal{N}_{\mathcal{D}})_x \textrm{ not free} \} = \bigcup_{p=1}^n \supp \Big( \inext^p(\mathcal{N}_{\mathcal{D}},\ox) \Big) . $$ 

A \textit{foliation} is an integrable
distribution; that is, in the notation of the previous paragraph, for each $x\in X\setminus Z$, there is an open neighborhood $x\in U\subset X$ (in the Euclidean topology) and an analytic submersion $\phi:U\to \C^{m}$ such that $\calt_{\mathcal{D}}|_U \simeq \ker(d\phi)$; the fibers of $\phi$ glue together and define immersed analytic subvarieties called the leaves of $\cald$. Equivalently by Frobenius' theorem, the tangent sheaf $\calt_{\mathcal{D}}$ is closed under the Lie bracket of vector fields.

We suggest \cite{zbMATH07323410} as a general reference for distributions; in addition, we also refer to two recent papers about distributions and foliations on toric varieties, namely \cite{pena2024codimension} and \cite{WANG202370}.
When $X=\PP^n$, Muniz showed in \cite[Appendix A]{FaenziJardimValle} that the logarithmic tangent sheaf $\calt\langle\ff\rangle_0$ associated with a sequence of homogeneous polynomials $\ff$ coincides, up to a twist by $\mathcal{O}_{\mathbb{P}^n}(-1)$, with the tangent sheaf of a \textit{rational} foliation on $\PP^n$.

The goal of the present section is to provide a partial generalization of \cite[Proposition A.2]{FaenziJardimValle} to the context of toric varieties. More precisely, given a $k$-tuple of homogeneous elements $\ff=(f_1,\ldots,f_k)$ in the Cox ring $S$ of a simplicial toric variety $X=X_\Sigma$, we define the following morphism of abelian groups:
\begin{equation} \label{eq:degmap}
\mathbf{deg}_\ff : \ZZ^{\oplus k} \to \Cl(X), \qquad (a_1,\ldots,a_k) \mapsto \sum_{1 \le i \le k} a_i\deg(f_i).  
\end{equation}

Define the \textit{rank degree} of $\ff$ as the rank of this morphism and set $\beta_i:=\deg(f_i)$. If $q:=\rk(\mathbf{deg}_\ff)$, then $\deg(\ff)=(\beta_1,\ldots,\beta_k)$ determines $k$ points $[\beta_1],\ldots,[\beta_k]$ with integral coefficients in the projective space $\p{q-1}$ over $\Q$. If $X$ is projective, then $\beta_i \ne 0$ for all $i \in \llbracket 1,k\rrbracket =\{1,\ldots,k\}$. By definition of $q$, these points are contained in no hyperplane. We say that $\deg(\ff)$ \textit{satisfies the Cayley--Bacharach condition} if there is no hyperplane of $\p{q-1}$ containing $k-1$ points among $[\beta_1],\ldots,[\beta_k]$.
Note that this condition is always satisfied when $q=1$; in particular, this happens if $\rho = 1$. When $q=2$, the Cayley--Bacharach property holds as soon as $\deg(\ff)$ consists of at least $3$ non-proportional vectors in $\Z^2$.

In addition, the degree morphism in display \eqref{eq:degmap} induces, by dualization and tensoring with $\calo_{X}$, the following morphism of sheaves
$$\Phi_\ff: {\rm Cl}(X)^{\vee}\otimes_{\Z} \calo_{X}\lra \calo_{X}^{\oplus k}, \qquad \Phi_\ff(\phi\otimes g) := (\phi(\beta_1)\cdot g,\dots, \phi(\beta_k)\cdot g). $$
Recall that that $\Cl(X)$ is a finitely generated group of rank $\rho$  (possibly with torsion), so $\Cl(X)^{\vee}$ is free and 
$\Cl(X)^{\vee}\otimes_{\Z} \calo_{X}\simeq \calo_{X}^{\oplus \rho}$. Since $\im(\Phi_\ff)$ is a subsheaf of $\calo_{X}^{\oplus k}$ and a quotient of $\calo_{X}^{\oplus\rho}$, we get that $\im(\Phi_\ff)=\calo_{X}^{\oplus q}$. Furthermore, it also follows that $\ker(\Phi_\ff)\simeq\ox^{\oplus\rho-q}$.

\begin{thm}\label{dist}
Let $X=X_\Sigma$ be a projective simplicial toric variety, and let $\ff$ be a sequence of $k<n$ pairwise coprime algebraically independent polynomials of degree rank $q$ with  $k-n < q < k$. If $\deg(\ff)$ satisfies the Cayley--Bacharach condition, then $\ff$ induces a foliation $\cald_{\ff}$ of codimension $k-q$ on $X$. If $X$ is non-singular, the singular scheme of $\cald_{\ff}$ contains $\V(\ff)$.
\end{thm}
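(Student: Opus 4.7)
\textit{Construction and rank.} My plan is to identify the tangent sheaf of the would-be distribution as a quotient of $\calt_\Sigma\langle \ff \rangle_0$ obtained from the Euler sequence, then to produce first integrals and a defining twisted $(k-q)$-form in order to establish integrability and locate the singular scheme. More precisely, I set $\calt_{\cald_\ff} := \calt_\Sigma\langle \ff \rangle_0 / \epsilon(\ker\Phi_\ff \otimes_{\ZZ} \calo_X)$. The toric Euler formula (Proposition \ref{Euler relation}), packaged as the identity $\nabla(\ff) \circ \epsilon = \operatorname{diag}(\ff) \circ \Phi_\ff$, yields $\calt_\Sigma\langle \ff \rangle_0 \cap \epsilon(\Cl(X)^\vee \otimes_{\ZZ} \calo_X) = \epsilon(\ker\Phi_\ff \otimes_{\ZZ} \calo_X)$, so $\calt_{\cald_\ff}$ injects into $\calt_X$ via the Euler sequence and has rank $n+\rho-k-(\rho-q) = n-(k-q)$. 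To conclude that $\cald_\ff$ is a genuine codimension-$(k-q)$ distribution I need $\caln_{\cald_\ff} := \calt_X / \calt_{\cald_\ff}$ to be torsion-free of rank $k-q$. Embedding $\caln_{\cald_\ff}$ into $\bigoplus_j \calo_X(\beta_j) / \cale$, with $\cale$ the image of $\nabla(\ff) \circ \epsilon$, reduces this to analyzing torsion of the ambient quotient along each prime component of some $\V(f_{j_0})$. This is where the Cayley--Bacharach property enters: because no $k-1$ of the points $[\beta_1], \ldots, [\beta_k]$ lie on a hyperplane of $\Pj^{q-1}$, the remaining degrees $(\beta_j)_{j \ne j_0}$ retain rank $q$, which prevents torsion from appearing along $\V(f_{j_0})$. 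I expect this torsion-freeness step to be the main technical obstacle.

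\textit{Integrability via first integrals.} Choose a $\ZZ$-basis $a_1, \ldots, a_{k-q}$ of $\ker(\mathbf{deg}_\ff) \subset \ZZ^k$. Since $\sum_j a_{lj}\beta_j = 0$, each monomial $h_l := \prod_j f_j^{a_{lj}}$ is a rational function of degree zero on $X$. On the open set $U := X \setminus \bigcup_j \V(f_j)$, where every $f_j$ is invertible, the Euler relation together with the identity $\theta(h_l)/h_l = \sum_j a_{lj}\theta(f_j)/f_j$ shows, for $\theta \in \calt_X|_U$ with lift $\widetilde\theta \in \bigoplus_i \calo_X(D_i)|_U$, that $\theta$ lies in $\calt_{\cald_\ff}|_U$ if and only if $(\theta(f_j)/f_j)_j$ belongs to the image of $\Phi_\ff$, equivalently $\theta(h_l) = 0$ for every $l$. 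Thus $\calt_{\cald_\ff}|_U$ coincides with the tangent sheaf to the fibers of $H := (h_1, \ldots, h_{k-q}) \colon X \dashrightarrow \A^{k-q}$, so it is integrable by Frobenius; integrability extends to all of $X$ because $\calt_{\cald_\ff}$ is reflexive and $X \setminus U$ is contained in a proper closed subset.

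\textit{Singular scheme.} To show $\V(\ff) \subset \operatorname{Sing}(\cald_\ff)$, I realize $\cald_\ff$ through the twisted decomposable $(k-q)$-form $\omega := \omega_1 \wedge \cdots \wedge \omega_{k-q}$, where $\omega_l := \sum_j a_{lj} \widehat{f}_j \, df_j$ and $\widehat{f}_j := \prod_{j' \ne j} f_{j'}$; this is the cleared-denominators avatar of $\bigwedge_l dh_l/h_l$, and under the standard correspondence between distributions and decomposable forms its zero scheme coincides with $\operatorname{Sing}(\cald_\ff)$. For any $p \in \V(\ff)$, every $f_{j'}(p)$ vanishes, so $\widehat{f}_j(p) = 0$ for every $j$ (using $k \ge 2$), yielding $\omega_l(p) = 0$ for every $l$ and hence $\omega(p) = 0$, which places $p$ in the singular scheme.
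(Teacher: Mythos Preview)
Your construction of $\calt_{\cald_\ff}$ and the torsion-freeness outline match the paper closely: the paper embeds $\caln_\ff$ into $\caloker(\tilde\ff)$ with $\tilde\ff=\diag(\ff)\cdot A$ (your $\cale$ is $\im\tilde\ff$) and proves torsion-freeness by checking that the maximal minors $\det(A_{i_1,\ldots,i_q})f_{i_1}\cdots f_{i_q}$ share no common factor---precisely your ``$(\beta_j)_{j\ne j_0}$ retain rank $q$'' read through complementary minors. Your integrability argument is the paper's in dual coordinates: the paper produces a rational map $\phi:X\dashrightarrow T_k/T_q$ and identifies $\calt_\ff$ with $\ker(d\phi)$, and your $h_l=\prod_j f_j^{a_{lj}}$ are exactly the coordinate functions on $T_k/T_q\cong(\C^*)^{k-q}$ coming from the chosen basis of $\ker(\mathbf{deg}_\ff)$.

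The singular-scheme step, however, has a genuine gap when $k-q\ge 2$. Since $\omega_l=(\prod_jf_j)\,d\log h_l$, one has
\[
\omega=\Big(\prod_jf_j\Big)^{k-q-1}\omega',\qquad
\omega':=\Big(\prod_jf_j\Big)\bigwedge_{l}d\log h_l
=\sum_{J}\det(a_{l,j})_{j\in J}\Big(\prod_{j\notin J}f_j\Big)\,\bigwedge_{j\in J}df_{j},
\]
so for $k-q\ge2$ your $\omega$ vanishes along every $\V(f_j)$ and is \emph{not} the saturated defining form; hence $Z(\omega)\supsetneq\operatorname{Sing}(\cald_\ff)$, and the claimed equality ``its zero scheme coincides with $\operatorname{Sing}(\cald_\ff)$'' is false. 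Showing $\omega(p)=0$ on $\V(\ff)$ then proves nothing. The repair is easy---work with $\omega'$ instead (Cayley--Bacharach shows $\omega'$ has no divisorial zeros along any $\V(f_{j_0})$, and each coefficient $\prod_{j\notin J}f_j$ still vanishes on $\V(\ff)$ since $q\ge1$)---but the argument as written does not stand. The paper avoids this pitfall entirely by arguing sheaf-theoretically: from $0\to\caln_\ff\to\caloker(\tilde\ff)\to\caloker(\nabla(\ff))\to0$ it observes $\V(\ff)\subset\supp\,\inext^1(\caloker\tilde\ff,\calo_X)$ and pushes this into $\supp\,\inext^1(\caln_\ff,\calo_X)$.
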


We will break down the proof into two lemmas. We will first construct the distribution underlying the induced foliation $\cald_{\ff}$. 

\begin{lma}
Under the hypothesis of Theorem \ref{dist}, $\ff$ induces a distribution $\cald_{\ff}$ of codimension $k-q$ on $X$. If $X$ is non-singular, the singular scheme of this distribution contains $\V(\ff)$.
\end{lma}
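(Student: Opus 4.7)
The plan is to realize the tangent sheaf of $\cald_\ff$ as the saturation in $\calt_X$ of the image of the toric logarithmic sheaf $\calt_\Sigma\langle\ff\rangle_0$ under the Euler surjection, verify the rank via a diagram chase based on the generalized Euler relation, and then detect the singular scheme by a local analysis near $\V(\ff)$, where the Cayley--Bacharach hypothesis intervenes.

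First I would combine the Euler sequence with the Jacobian matrix. Proposition \ref{Euler relation} provides the commutative square
$$\xymatrix{\calo_X^{\oplus\rho}\ar[r]^-{\Phi_\ff}\ar[d]_-\epsilon & \calo_X^{\oplus k}\ar[d]^-{(f_1,\dots,f_k)}\\ \bigoplus_i \calo_X(D_i)\ar[r]^-{\nabla(\ff)} & \bigoplus_j \calo_X(\beta_j).}$$
Since each $f_j$ is a nonzero divisor, the right vertical arrow is injective; consequently $\epsilon^{-1}\big(\calt_\Sigma\langle\ff\rangle_0\big)=\ker(\Phi_\ff)\simeq \calo_X^{\oplus\rho-q}$, so the image of $\calt_\Sigma\langle\ff\rangle_0$ in $\calt_X$ has rank $(r-k)-(\rho-q)=n-k+q$, positive because $q>k-n$. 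Let $\calt_{\cald_\ff}$ be the saturation of this image inside $\calt_X$ and set $\caln_{\cald_\ff}:=\calt_X/\calt_{\cald_\ff}$; then $\caln_{\cald_\ff}$ is torsion-free of rank $k-q$ by construction, yielding a distribution of codimension $k-q$.

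For the singular scheme claim, assume $X$ is smooth, so that by the excerpt the singular scheme of $\cald_\ff$ coincides, as a closed subset, with the non-locally-free locus of $\caln_{\cald_\ff}$. I would apply the snake lemma to the square above, enriched on the left by the Euler short exact sequence and on the right by $0\to\calo_X^{\oplus k}\to\bigoplus_j\calo_X(\beta_j)\to\bigoplus_j\calo_{D_j}(\beta_j)\to 0$, producing a six-term exact sequence from which one reads off a local presentation of $\caln_{\cald_\ff}$ in terms of $\nabla(\ff)$, $(f_1,\dots,f_k)$ and $\Phi_\ff$. At a point $p\in \V(\ff)$ every $f_j(p)=0$, so the right vertical arrow vanishes at $p$, producing a rank drop recorded by a Fitting ideal of $\caln_{\cald_\ff}$. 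The Cayley--Bacharach hypothesis---no $k-1$ of the points $[\beta_1],\dots,[\beta_k]$ lie on a hyperplane of $\p{q-1}$---is precisely what prevents this rank drop from being absorbed into the free summand $\calo_X^{\oplus k-q}$ already present in the snake sequence, so the Fitting locus of $\caln_{\cald_\ff}$ contains the whole $\V(\ff)$.

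The main obstacle will be the last step: while the rank count producing the distribution is essentially bookkeeping, pinning down how the Cayley--Bacharach condition forces $\V(\ff)$ to lie in the Fitting locus of $\caln_{\cald_\ff}$ requires a careful local analysis of the snake-lemma presentation, adapting \cite[Proposition~A.2]{FaenziJardimValle} from $\Pj^n$ (where the Euler contribution has rank one) to the toric setting (where $\Phi_\ff$ has rank $q$) and checking that no spurious cancellation reduces the rank-drop locus to a proper subscheme of $\V(\ff)$.
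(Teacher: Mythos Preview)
Your saturation shortcut does produce a codimension-$(k-q)$ distribution, but it does so for \emph{any} $\ff$ with degree rank $q$, using neither pairwise coprimality nor Cayley--Bacharach; this should alert you that you have misplaced the role of the hypotheses. The paper does not saturate. It sets $\calt_\ff:=\calt_\Sigma\langle\ff\rangle_0/\ker(\Phi_\ff)$ and proves that the resulting normal sheaf $\caln_\ff=\im(\nabla(\ff))/\im(\diag(\ff)\!\restriction_{\im\Phi_\ff})$ is \emph{already} torsion-free. This is where Cayley--Bacharach is actually used: writing $\im(\Phi_\ff)\simeq\calo_X^{\oplus q}$ via a rank-$q$ integer matrix $A$, the restricted map $\tilde\ff$ has matrix $\diag(\ff)\,A$, whose $q\times q$ minors are $\det(A_{i_1,\dots,i_q})\,f_{i_1}\cdots f_{i_q}$; Cayley--Bacharach says that for each $j$ some $\det(A_{i_1,\dots,i_q})$ with $j\notin\{i_1,\dots,i_q\}$ is nonzero, and pairwise coprimality then forces the minors to have no common factor, so $\caloker(\tilde\ff)$ --- hence its subsheaf $\caln_\ff$ --- is torsion-free. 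The final example of the section (item iv) shows that without Cayley--Bacharach $\calt_\ff$ is genuinely unsaturated, so your $\calt_{\cald_\ff}$ and the paper's $\calt_\ff$ are different objects in general; since the integrability lemma and Corollary~\ref{cor:sqc} both need the tangent sheaf to be exactly $\calt_\Sigma\langle\ff\rangle_0/\ker(\Phi_\ff)$, your construction would not feed into the rest of the argument.

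Once torsion-freeness is in hand, the singular-scheme claim is a short $\inext$ computation rather than a Fitting-ideal chase: the bottom row of the second diagram gives $0\to\caln_\ff\to\caloker(\tilde\ff)\to\caloker(\nabla(\ff))\to 0$, and applying $\inext^p(-,\ox)$ one reads off $\V(\ff)\subset\supp\inext^1(\caloker(\tilde\ff),\ox)$ directly from the two-term presentation $\calo_X^{\oplus q}\xrightarrow{\tilde\ff}\bigoplus_j\calo_X(\beta_j)$ (at any point of $\V(\ff)$ the matrix $\tilde\ff$ vanishes), while $\inext^{p+1}(\caloker(\nabla(\ff)),\ox)$ is supported in codimension $\ge p+1$. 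Your proposed snake-lemma/Fitting route is not wrong in spirit, but after saturating you have thrown away precisely the explicit presentation of the normal sheaf that makes this step a two-line computation; the ``careful local analysis'' you flag as the main obstacle is a difficulty you created by saturating prematurely.
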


\begin{proof}
The Euler relation for $\phi\in 
\Hom(\Cl(X),\Z)$ gives
$$\sum_{j=1}^r\phi([D_{j}])x_{j}\frac{\partial f_i}{\partial x_{j}}= \phi(\beta_i)f_i, \qquad \text{for all $1\leq i \leq k$}.$$ 
Using such relation and the generalized Euler sequence in toric varieties, we construct the next commutative exact diagram, where 
$\epsilon$ is defined by $\phi \mapsto (\phi([D_1])x_1,\dots \phi([D_r])x_r)$ and $\diag(\ff)$ is the $k\times k$ diagonal matrix with entries $(f_1,\dots,f_k)$. We then have the following commutative diagram:
$$ \xymatrix@-1.5ex{  & 0\ar[d] & 0\ar[d]& 0 \ar[d] \\
0\ar[r] & \ker(\Phi_\ff)\ar[r] \ar[d] & {\rm Cl}(X)^{\vee}\otimes_{\Z} \calo_{X}\ar[d]_-{\epsilon}\ar[r]^-{\Phi_\ff} &  \calo_{X}^{\oplus k}\ar[d]^-{\diag(\ff)}\\
0\ar[r]&\calt_\Sigma\langle \ff \rangle_0\ar[r] &\bigoplus_{i=1}^r \calo_{X}(D_{i}) \ar[r]^-{\nabla(\ff)}& \bigoplus_{i=1}^k\calo_{X}(\beta_i) . } $$
Set $\calt_{\ff}:=\calt_\Sigma\langle \ff \rangle_0/\ker(\Phi_\ff)$ and note that this is a subsheaf of ${\rm coker}(\epsilon)=\calt_X$. Moreover, since $\diag(\ff)$ maps $\im (\diag(\ff))$ into $\im (\nabla(\ff))$, we set $\mathcal{N}_{\ff}:=\im (\nabla(\ff))/\im (\diag(\ff))$. We end up with the following commutative diagram:
\begin{equation} \label{eq:diag1}
\begin{split}
\xymatrix@-1ex{  & 0\ar[d] & 0\ar[d]& 0 \ar[d] &\\
0\ar[r] & \ker(\Phi_\ff) \ar[r] \ar[d] & {\rm Cl}(\Sigma)^{\vee}\otimes_{\Z} \calo_{X}\ar[d]_{\eta}\ar[r] &  \im (\Phi_\ff) \ar[d]\ar[r]&0\\
0\ar[r]&\calt_\Sigma\langle \ff \rangle_0\ar[r]\ar[d] &\bigoplus_{i=1}^r \calo_{X}(D_{i})\ar[d] \ar[r]^{\nabla(\ff)}& \im(\nabla(\ff)) \ar[d]\ar[r]&0 \\
0\ar[r]& \calt_{\ff}\ar[d]\ar[r] & \calt_{X}\ar[r]\ar[d] & \mathcal{N}_{\ff}\ar[d]\ar[r]&0\\
{}&{0}& 0& 0 }    
\end{split}
\end{equation}
Our goal is to show that the bottom line in the previous diagram defines a distribution of codimension $k-q$ on $X$.

Since $\ff$ is algebraically independent, the sheaf $\im (\nabla(\ff))$ has maximal rank $k$ by \cite[Theorem 2.3]{EhrenborgRota1993}; it follows that $\rk(\mathcal{N}_{\ff})=k-q$. In addition, $\calt_{X}$ is reflexive (because $X$ is normal), therefore, it is enough to prove that $\mathcal{N}_{\ff}$ is torsion-free. 

By hypothesis, $\ff$ has degree rank $q$, so there are $(\gamma_1,\ldots,\gamma_q) \in \Cl(X)^{\oplus q}$ and a rank-$q$ matrix $A=(a_{i,j})$, with $1 \le i \le k$ and $1 \le j \le q$ such that, for all $\phi \in \Cl(X)^\vee$, one has 
 \[\Phi_\ff(\phi)=A \begin{pmatrix}
     \phi(\gamma_1) \\
     \vdots\\
     \phi(\gamma_q)
\end{pmatrix}. \]
We have $q<k$ by assumption and recall that  $\im(\Phi_\ff)\simeq\calo_{X}^{\oplus q}$ and $\ker(\Phi_\ff) \simeq \calo_{X}^{\rho-q}$. Let $\tilde{\ff}$ be the restriction of $\diag(\ff)$ to $\im(\Phi_\ff)$; its matrix expression is then given by
\[ \diag(\ff) A = \begin{pmatrix}
a_{1,1}f_1 & \cdots & a_{1,q} f_1 \\
\vdots & \ddots & \vdots \\
a_{k,1}f_1 & \cdots & a_{k,q} f_k 
\end{pmatrix}. \]
We then obtain the following commutative diagram

\begin{equation} \label{eq:diag2}
\begin{split}
\xymatrix@-1ex{  & 0\ar[d] & 0\ar[d]&   &\\
& \calo_{X}^{\oplus q}\ar@{=}[r]  \ar[d] & \calo_{X}^{\oplus q}\ar[d]^-{\tilde\ff} & &\\
0\ar[r]&\im(\nabla(\ff))\ar[r]\ar[d] &\bigoplus_{j=1}^k \calo_{X}(\beta_j)\ar[d] \ar[r]& \caloker(\nabla(\ff)) \ar@{=}[d] \ar[r] &0 \\
0\ar[r]& \mathcal{N}_{\ff}\ar[d]\ar[r] & \caloker(\tilde{\ff})\ar[r]\ar[d] & \caloker(\nabla(\ff))\ar[r]&0\\
{}&{0}& 0&  }
\end{split}
\end{equation}

Now, we claim that $\caloker(\tilde \ff)$ is torsion-free, implying that $\mathcal{N}_{\ff}$ is also torsion-free. 
To check this, it suffices to show that the maximal minors of the matrix in the previous display have no common factor. Any maximal minor of this matrix is obtained by choosing $1 \le i_1 < \cdots < i_q \le k$ and considering the matrix $A_{i_1,\ldots,i_q}$ obtained by selecting the columns $(i_1,\ldots,i_q)$ of $A$: the minor will be 
 $$\det((\diag(\ff) A)_{i_1,\ldots,i_q}) = \det(A_{i_1,\ldots,i_q})f_{i_1}\cdots f_{i_q}$$.

Since $f_1,\ldots,f_k$ are pairwise coprime, to check that these minors have no common factor it suffices to show that, for any $j \in \{1,\ldots,k\}$, there is a non-vanishing minor $A_{i_1,\ldots,i_q}$, where the indices $1\le i_1,\ldots,i_q \le k$ lie in $\llbracket 1,k\rrbracket \setminus \{j\}$, as then $\det(A_{i_1,\ldots,i_q})f_{i_1}\cdots f_{i_q}$ is coprime to $f_j$. Since $\deg(\ff)$ satisfies the Cayley--Bacharach property of rank-degree $q$, for any choice of $j \in \llbracket 1,k \rrbracket$, the points of $\p {q-1}$ determined by $\{[\beta_i] \mid i \in \llbracket 1,k\rrbracket \setminus \{j\}\}$ span $\p {q-1}$, so there is a minor $\det(A_{i_1,\ldots,i_q})\ne 0$, where $i_1,\ldots,i_q \in \llbracket 1,k\rrbracket $ are distinct from $j$. This shows that the maximal minors of $\diag(\ff) A$ have no common factor.

In conclusion, $\mathcal{N}_{\ff}$ is torsion-free, so the bottom sequence of the diagram in display \eqref{eq:diag2} defines a distribution on $X$, namely:
$$ \mathcal{D}_{\ff}  : \quad 0\lra \calt_{\ff}\lra \calt_X \lra \mathcal{N}_{\ff} \lra 0. $$

For the second claim, assume now that $X$ is non-singular. The bottom sequence in diagram \eqref{eq:diag2} yields the short exact sequence for each $p=1,\dots,n$
$$ \inext^p(\caloker\tilde{\ff},\ox) \lra \inext^p(\mathcal{N}_{\ff},\ox) \lra \inext^{p+1}(\caloker(\nabla(\ff)),\ox). $$

Dualizing the middle column we notice:
\begin{align*}
  &\supp\big(\inext^1(\caloker\tilde{\ff},\ox)\big)=\V(\ff), \\
  &\inext^p(\caloker\tilde{\ff},\ox)=0, && \mbox{for $p>1$},\\
  &\calodim\big(\inext^{p+1}(\caloker(\nabla(\ff)),\ox)\big)\ge p+1.
\end{align*}
It follows that $\V(\ff)\subset\supp\big(\inext^1(\caln_{\ff},\ox)\big)$, as desired.
\end{proof}

Next, we complete the proof of Theorem \ref{dist} by showing that $\cald_{\ff}$ is integrable.

\begin{lma}
Under the hypothesis of Theorem \ref{dist}, there exists a toric
variety $Y$ and a rational map $\phi : X\dashrightarrow Y$ such that $\calt_\ff$ is tangent to its fibers.
\end{lma}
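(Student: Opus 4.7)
The idea is to build an explicit rational map $\phi : X \dashrightarrow Y$ to a toric variety $Y$ of dimension $k-q$ out of Laurent monomials in $f_1,\dots,f_k$ whose exponent vectors lie in the kernel of the degree map, and to verify that $\calt_\ff$ is tangent to the fibers of $\phi$. Set $K := \ker(\mathbf{deg}_\ff) \subseteq \Z^{\oplus k}$, which is a free abelian group of rank $k-q$ because $\mathbf{deg}_\ff$ has rank $q$.

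\textbf{Step 1 (Construction of $\phi$).} Take $Y := \Spec\C[K]$, the algebraic torus of dimension $k-q$ with character lattice $K$; this is an affine toric variety. For each $\mathbf{b}=(b_1,\dots,b_k)\in K$, the Laurent monomial $f^{\mathbf{b}} := f_1^{b_1}\cdots f_k^{b_k}$ has degree $\mathbf{deg}_\ff(\mathbf{b})=0$ in $\Cl(X)$ and therefore lies in $\C(X)$. Define $\phi : X \dashrightarrow Y$ by $\phi^*(\chi^\mathbf{b}) = f^\mathbf{b}$. Choosing a $\Z$-basis $\mathbf{b}^{(1)},\dots,\mathbf{b}^{(k-q)}$ of $K$, the Laurent monomials $g_j := f^{\mathbf{b}^{(j)}}$ are algebraically independent in $\C(X)$ (the $f_i$'s are algebraically independent and the $\mathbf{b}^{(j)}$'s are $\Q$-linearly independent in $\Q^{\oplus k}$); hence $\phi$ dominates $Y$, and its generic fiber has dimension $n-(k-q)$, matching $\rk(\calt_\ff)$.

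\textbf{Step 2 (Tangency).} Any derivation $\theta \in \calt_\Sigma\langle\ff\rangle_0 = \ker(\nabla(\ff))$ satisfies $\theta(f_i)=0$ for every $i$. By the Leibniz rule,
$$\theta(f^{\mathbf{b}}) \;=\; \sum_{i=1}^{k} b_i\, f_1^{b_1}\cdots f_i^{b_i-1}\cdots f_k^{b_k}\,\theta(f_i) \;=\; 0, \qquad \text{for every } \mathbf{b}\in K.$$
Passing to the induced derivation on $\C(X)$ through the quotient $\calt_\ff = \calt_\Sigma\langle\ff\rangle_0/\ker(\Phi_\ff)$, every local section of $\calt_\ff$ annihilates each $\phi^*(\chi^\mathbf{b})$. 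Thus $\calt_\ff \subseteq \ker(d\phi)$ on the open set $U \subseteq X$ where $\phi$ is a regular submersion; since both are reflexive and of the same generic rank $n-(k-q)$, Lemma \ref{lem:iso1} upgrades the inclusion to an equality on $U$.

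\textbf{Step 3 (Integrability).} For local sections $\theta_1,\theta_2$ of $\calt_\ff|_U$ and any $\mathbf{b}\in K$, one has $[\theta_1,\theta_2](\phi^*\chi^\mathbf{b}) = \theta_1(\theta_2\phi^*\chi^\mathbf{b}) - \theta_2(\theta_1\phi^*\chi^\mathbf{b}) = 0$, so $[\theta_1,\theta_2] \in \ker(d\phi|_U) = \calt_\ff|_U$. Hence $\calt_\ff|_U$ is involutive. Because $\calt_\ff$ is reflexive (the kernel of the surjection $\calt_X \twoheadrightarrow \caln_\ff$ onto a torsion-free sheaf on the normal variety $X$) and $X\setminus U$ has codimension at least two (as $\phi$ is defined on the complement of the indeterminacy locus, which has codimension $\ge 2$ on a normal variety), the involutivity extends from $U$ to all of $X$ by the $S_2$ property. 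Therefore $\cald_\ff$ is an integrable distribution, with leaves given by the fibers of $\phi$.

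The main obstacle is Step 3: since $\phi$ is only a rational map, one must translate the pointwise tangency to fibers on the locus of definition into an intrinsic involutivity of the reflexive sheaf $\calt_\ff$ on all of $X$. The reflexivity of $\calt_\ff$ and the equality $\calt_\ff|_U = \ker(d\phi|_U)$ provided by Lemma \ref{lem:iso1} are exactly the tools needed to perform this extension.
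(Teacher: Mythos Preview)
Your construction of $Y$ and $\phi$ is essentially dual to the paper's: the paper takes $Y=T_k/T_q$ with $\phi=(f_1,\ldots,f_k)$, while you take the character-lattice side $Y=\operatorname{Spec}\C[K]$ with $\phi$ given by degree-zero Laurent monomials. These describe the same rational map to the same $(k-q)$-dimensional torus, so the overall strategy coincides with the paper's. However, two steps contain genuine errors.

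\emph{Step 2.} You invoke Lemma~\ref{lem:iso1} to upgrade $\calt_\ff|_U \subseteq \ker(d\phi|_U)$ to an equality, but you verify neither hypothesis of that lemma: you compute no Chern classes and say nothing about the degeneracy locus of the inclusion. The correct argument is simpler and already available from the preceding lemma: since $\caln_\ff=\calt_X/\calt_\ff$ is torsion-free, $\calt_\ff$ is \emph{saturated} in $\calt_X$, so any subsheaf $\calg\subseteq\calt_X$ with $\calt_\ff\subseteq\calg$ and $\rk(\calg)=\rk(\calt_\ff)$ satisfies $\calg=\calt_\ff$ (the quotient $\calg/\calt_\ff$ would be a torsion subsheaf of $\caln_\ff$). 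Apply this with $\calg=\ker(d\phi|_U)$. The paper avoids this issue altogether by identifying $\ker(d\phi)$ with $\calt_\ff$ directly, recognising $d\phi$ inside the commutative diagrams \eqref{eq:diag1}--\eqref{eq:diag2}: it observes that $\phi^*(\calt_Y)=\caloker(\tilde\ff)$ and that the lift of $d\phi$ through the Euler sequence is $\nabla(\ff)$, so $\ker(d\phi)=\calt_\ff$ can be read off the diagram.

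\emph{Step 3.} The claim that $X\setminus U$ has codimension at least two is false. Any nonzero $\mathbf b\in K$ has both positive and negative entries (on a projective $X$ a nontrivial nonnegative combination $\sum b_i\beta_i$ of effective classes cannot vanish), so each $f^{\mathbf b}$ has a genuine pole along some $\V(f_i)$; thus $\phi$ is undefined on the \emph{divisor} $\V(f_1\cdots f_k)$. The general principle you cite about indeterminacy loci requires the target to be proper, and your $Y$ is affine. Fortunately codimension two is unnecessary here: the O'Neill tensor $\bigwedge^2\calt_\ff\to\caln_\ff$ is $\calo_X$-linear, vanishes on the dense open $U$, and lands in a torsion-free sheaf, hence vanishes identically on $X$. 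That, not the $S_2$ property, is what extends involutivity.
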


\begin{proof}
The matrix $A$ appearing in the proof of Theorem \ref{dist} defines an action of $T_q = (\CC^*)^q$ on $\CC^k$ by the weights given by coefficients of $A$. In other word,s we consider the torus $T_k = (\CC^*)^k \subset \CC^k$ and the morphism $T_q \to T_k$ defined by the matrix $(a_{i,j})$ as:
\[
(\lambda_1,\ldots,\lambda_q) \mapsto (\lambda_1^{a_{1,1}}\cdots \lambda_q^{a_{1,q}},\ldots,
\lambda_1^{a_{k,1}}\cdots \lambda_q^{a_{k,q}}).
\]

Then this morphism has a finite kernel, the associated map of Lie algebras being given by matrix $(a_{i,j})$, which has rank $q$ by definition. Then $T_q$ operates on $T_k$ and we get the expected action on $\CC^k$. This way, $\ff=(f_1,\ldots,f_k)$ defines a rational map:
\[
\phi : X \dashrightarrow T_k / T_q, \qquad x \mapsto (f_1(x),\ldots,f_k(x)).
\]

Indeed, let $U$ be the intersection of the smooth locus of $X$ with the complement of $\VV(f_1) \cup \cdots \cup \VV(f_k)$. Then for any $x \in U$ and any representative $x'$ of $x$, there is $\lambda = (\lambda_1,\ldots,\lambda_q) \in T_q$ such that 
\[
f_i(x')=\lambda_1^{a_{i,1}} \cdots \lambda_q^{a_{i,q}}f_i(x), \qquad \mbox{for all $i \in \llbracket 1,k\rrbracket.$}
\]

We argue that the fibers of $\phi$ are the leaves of a foliation whose tangent sheaf is $\calt_\ff$. Indeed, let $Y = T_k / T_q$. Then the differential of $\phi$ along $U$ gives a map
\[
\mathrm{d} \phi : \calt_U \to \phi^*(\calt_Y),
\]
and the kernel of $\mathrm{d}  \phi$ at a point $x \in U$ is the tangent space to the fiber of $\phi$ passing through $x$.

On the other hand, the tangent sheaf of $Y$ is the cokernel of the map $g:\calo_Y^{\oplus q} \to \calv$ where $\calv$ is the tautological bundle of rank $k$ on $Y$ given by the weights of $T_q$ acting on $T_k$.
By definition of $\phi$, the pull-back $\phi^*(\calv)$ is just $\bigoplus_{1 \le j \le k} \calo_X(\beta_i)$ and the pull-back of $g$ is just $\tilde \ff$.
Moreover, lifting $\calt_U$ to $\bigoplus_{1 \le i \le r} \calo_U(D_i)$, the map $\mathrm{d} \phi$ is expressed by $\nabla(\ff)$.
So $\mathrm{d} \phi$ fits into:
\[
\xymatrix@-1ex{
& \calo_U^{\oplus q} \ar[d] \\
\bigoplus_{1 \le i \le r}\calo_U(D_i) \ar^-{\tilde \ff }[d] \ar^-{\nabla \ff}[r] & \bigoplus_{1 \le j \le k} \calo_U(\beta_j) \ar[d] \\
\calt_U \ar^-{\mathrm{d} \phi}[r]& \phi^*(\calt_Y)
}
\]
Therefore, comparing with the diagrams in displays \eqref{eq:diag2} and \eqref{eq:diag1}, we conclude that $\ker(\mathrm{d} \phi)$ is isomorphic to $\calt_\ff$. It follows that the distribution $\mathcal{D}_{\ff}$ is indeed a foliation, with leaves given by the fibers of the rational map $\phi$.
\end{proof}

The following claim is an immediate consequence of Theorem \ref{dist}; it is a version of Proposition \ref{T vs TSigma} for sequences of homogeneous polynomials.

\begin{coro}\label{cor:sqc}
Assume that $X$ and $\ff$ satisfy the conditions of Theorem \ref{dist}. The following short exact sequence relates the logarithmic tangent sheaf $\calt_\Sigma\langle \ff \rangle_0$ and the tangent sheaf ${\calt}_{\ff}$ of the induced foliation $\mathcal{D}_{\ff}$:
$$0 \lra \calo^{\oplus \rho-q}_{X} \lra  \calt_\Sigma\langle \ff \rangle_0 \lra {\calt}_{\ff} \lra 0, $$
where $q$ is the rank degree of $\ff$. In particular, if $q=\rho(X)$ then $\calt_\Sigma\langle \ff \rangle_0={\calt}_{\ff}$.
\end{coro}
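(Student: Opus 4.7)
The proof is essentially a direct reading of the construction carried out in the proof of Theorem \ref{dist}, so the plan is to extract the required sequence from the leftmost column of diagram \eqref{eq:diag1} and identify the first term explicitly.

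First, I would recall from the discussion immediately preceding Theorem \ref{dist} that $\Cl(X)^{\vee}\otimes_\Z \calo_X \simeq \calo_X^{\oplus \rho}$ (since $\Cl(X)^\vee$ is free of rank $\rho$), that $\im(\Phi_\ff) \simeq \calo_X^{\oplus q}$, and consequently $\ker(\Phi_\ff) \simeq \calo_X^{\oplus \rho-q}$. This identification is the one non-trivial piece, but it was already established before the statement of Theorem \ref{dist} and uses only that $\Phi_\ff$ is a morphism between free sheaves whose image has rank equal to the degree rank $q$ of $\ff$.

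Next, I would invoke diagram \eqref{eq:diag1} built in the proof of Theorem \ref{dist}. Its leftmost column is precisely
\[
0 \lra \ker(\Phi_\ff) \lra \calt_\Sigma\langle \ff\rangle_0 \lra \calt_\ff \lra 0,
\]
since $\calt_\ff$ was defined exactly as the quotient $\calt_\Sigma\langle \ff\rangle_0 / \ker(\Phi_\ff)$. Substituting $\ker(\Phi_\ff) \simeq \calo_X^{\oplus \rho-q}$ yields the desired short exact sequence. The final claim follows at once: if $q = \rho(X)$, then $\ker(\Phi_\ff) = 0$, so the map $\calt_\Sigma\langle \ff\rangle_0 \to \calt_\ff$ is an isomorphism.

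There is no real obstacle here; the content of the corollary is already encoded in the diagram used to establish Theorem \ref{dist}, so the task is essentially bookkeeping. The only subtlety worth mentioning explicitly is that exactness on the left in \eqref{eq:diag1} uses that the composition $\ker(\Phi_\ff) \hookrightarrow \Cl(X)^\vee \otimes_\Z \calo_X \xrightarrow{\epsilon} \bigoplus_i \calo_X(D_i)$ factors through $\calt_\Sigma\langle \ff\rangle_0$ (because $\Phi_\ff$ vanishes on $\ker(\Phi_\ff)$, hence so does $\nabla(\ff) \circ \epsilon$ thanks to the Euler relation), and that this induced map is injective since $\epsilon$ is injective.
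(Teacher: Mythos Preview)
Your proposal is correct and matches the paper's own treatment: the paper states that the corollary is ``an immediate consequence of Theorem \ref{dist}'', and your argument spells out precisely the intended reading, namely extracting the leftmost column of diagram \eqref{eq:diag1} and substituting the identification $\ker(\Phi_\ff)\simeq\calo_X^{\oplus \rho-q}$ established just before Theorem \ref{dist}.
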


Note that the equality  $\calt_\Sigma\langle \ff \rangle_0={\calt}_{\ff}$ happens $\rho(X)=1$. 
Here is one example with a higher Picard rank.

\begin{ex}
Choose positive integers $n,m$ with $n+m \ge 3$. Set $X=\Pj^n\times \Pj^m$ with coordinates $x=(x_0,\ldots,x_n)$ and $y=(y_0,\ldots,y_m)$. Consider the sequence $\ff=(f_1,\ldots,f_k)$ in the following cases.
\begin{enumerate}[label=\roman*)]
\item If $k=2$ and $f_1(x,y)$ and $f_2(x,y)$ are non-constant bihomogeneous of the bidegree $(d_1,e_1)$ and $(d_2,e_2)$ with $d_1e_2\ne e_1d_2$, for instance $m \ge 2$, $f_1(x,y)=x_0y_0$ and $f_2(x,y)=x_1y_1y_2$, then $q=k=2$ and Theorem \ref{dist} does not apply.
\item In the same situation as before, if $d_1e_2=e_1d_2$, then $1=q<k=2$ and $\ff$ induces a foliation of codimension $k-q=1$.
For instance for $f_1(x,y)=x_0y_0$ and $f_2(x,y)=x_1y_1$, 
we get $\calt_\Sigma\langle \ff \rangle_0=\ox^{\oplus2}$.
Corollary \ref{cor:sqc} implies that ${\calt}_{\ff}=\ox$, so the foliation induced by the second sequence is given by the short exact sequence
$$ 0 \lra \ox \lra \calt_X \lra \cali_Z(2,2) \lra 0, $$
where $Z$ consists of 8 points, namely $\V(\ff)\cup\V\big(\bigwedge^2 \nabla(\ff)\big)$.
\item If $k=3$ and $f_1,f_2,f_3$ are non-constant bihomogeneous of bidegree $(d_1,e_1)$, $(d_2,e_2)$ and $(d_3,e_3)$ with $d_1e_2 \ne e_1d_2$, $d_1e_3 \ne e_1d_3$, and $d_2e_3\ne e_2d_3$, then we have $2=q<3=k$ and the three points $(d_1:e_1)$, $(d_2:e_2)$, $(d_3:e_3)$ of $\p 1$ satisfy the Cayley--Bacharach property, so Theorem \ref{dist} applies to give a foliation of codimension 1.
\item Take $n=3$, $m=4$, $k=3$ and $f_1=x_0y_0$, $f_2=x_1x_2y_1y_2$ and $f_3=x_3y_3y_4$ so the degrees are $(1,1)$, $(2,2)$ and $(1,2)$, hence $q=2<3=k$ but these degrees do not satisfy the Cayley--Bacharach property hence Theorem \ref{dist} does not apply. Note that the map $\tilde \ff$ can be expressed as:
\[\begin{pmatrix}
  x_0y_0  &  x_0y_0  \\
 2x_1x_2y_1y_2 & 2x_1x_2y_1y_2 \\
 x_3y_3y_4   &  2x_3y_3y_4
\end{pmatrix}
\]
The cokernel of this map has torsion along the divisor $\VV(x_3y_3y_4)$. In this case, the subsheaf $\calt_\ff$ of $\calt_X$ has determinant $\calo_X(-1,-1)$ and is not saturated, its saturation has determinant $\calo_X(0,1)$, the difference being given by the bidegree of $\VV(x_3y_3y_4)$.
\end{enumerate}
\end{ex}

\bibliographystyle{amsalpha-my}
\bibliography{references}

\providecommand{\bysame}{\leavevmode\hbox to3em{\hrulefill}\thinspace}
\providecommand{\MR}{\relax\ifhmode\unskip\space\fi MR }
\providecommand{\MRhref}[2]{%
  \href{http://www.ams.org/mathscinet-getitem?mr=#1}{#2}
}
\providecommand{\href}[2]{#2}
\begin{thebibliography}{ADHL14}

\bibitem[ADHL14]{arzhantsev_derenthal_hausen_laface_2014}
{\sc I.~Arzhantsev, U.~Derenthal, J.~Hausen, and A.~Laface}, \emph{Cox rings},
  Cambridge Studies in Advanced Mathematics, Cambridge University Press, 2014.

\bibitem[BC94]{BatyrevCox}
{\sc V.~V. Batyrev and D.~A. Cox}, \emph{On the {H}odge structure of projective
  hypersurfaces in toric varieties}, Duke Math. J. \textbf{75} (1994), no.~2,
  293--338.

\bibitem[CACJ20]{zbMATH07323410}
{\sc O.~Calvo-Andrade, M.~Corr{\^e}a, and M.~Jardim}, \emph{Codimension one
  holomorphic distributions on the projective three-space}, Int. Math. Res.
  Not. \textbf{2020} (2020), no.~23, 9011--9074 (English).

\bibitem[CLS11]{CoxLittleSchenck}
{\sc D.~Cox, J.~Little, and H.~Schenck}, \emph{Toric varieties}, vol. 124,
  American Mathematical Soc., 2011.

\bibitem[DGM23]{dGM}
{\sc R.~Di~Gennaro and F.~Malaspina}, \emph{Castelnuovo-mumford regularity and
  splitting criteria for logarithmic bundles over rational normal scroll
  surfaces}, arXiv e-pring math.AG/2307.01523, 2023.

\bibitem[ER93]{EhrenborgRota1993}
{\sc R.~Ehrenborg and G.-C. Rota}, \emph{Apolarity and canonical forms for
  homogeneous polynomials}, European J. Combin. \textbf{14} (1993), no.~3,
  157--181.

\bibitem[FJV21]{FaenziJardimValle}
{\sc D.~Faenzi, M.~Jardim, and J.~Vallès}, \emph{{L}ogarithmic sheaves on
  complete intersectons}, arXiv e-print math.AG/arXiv:2106.14453. To appear in
  Ann. Sc. Norm. Super. Pisa Cl. Sci., 2021.

\bibitem[FJV24]{FJV1}
{\sc \bysame}, \emph{{S}aito criterion and its avatars}, arXiv e-pring
  math.AG/arXiv:2402.08305. To appear in Rendiconti del Circolo Matematico di
  Palermo, 2024.

\bibitem[Har80]{Hartshorne1980}
{\sc R.~Hartshorne}, \emph{Stable reflexive sheaves}, Math. Ann. \textbf{254}
  (1980), no.~2, 121--176.

\bibitem[Jac94]{Jaczewski}
{\sc K.~Jaczewski}, \emph{Generalized {E}uler sequence and toric varieties},
  Classification of algebraic varieties ({L}'{A}quila, 1992), Contemp. Math.,
  vol. 162, Amer. Math. Soc., Providence, RI, 1994, pp.~227--247.

\bibitem[Nap24]{Napame}
{\sc A.~Napame}, \emph{Stability of equivariant logarithmic tangent sheaves on
  toric varieties of {P}icard rank two}, arXive e-pring math.AG/2111.15387. To
  appear in Annales de la faculté des sciences de Toulouse (2024).

\bibitem[OT92]{orlik1992arrangements}
{\sc P.~Orlik and H.~Terao}, \emph{Arrangements of hyperplanes}, Grundlehren
  der mathematischen Wissenschaften, Springer Berlin Heidelberg, 1992.

\bibitem[RPn24]{pena2024codimension}
{\sc M.~Rodr\'iguez Pe\~na}, \emph{On codimension one holomorphic distributions
  on compact toric orbifolds}, International Journal of Mathematics \textbf{35}
  (2024), no.~7, 2450024.

\bibitem[Sai80]{saito:logarithmic}
{\sc K.~Saito}, \emph{Theory of logarithmic differential forms and logarithmic
  vector fields}, J. Fac. Sci. Univ. Tokyo Sect. IA Math. \textbf{27} (1980),
  no.~2, 265--291.

\bibitem[Ser06]{sernesi}
{\sc E.~Sernesi}, \emph{Deformations of algebraic schemes}, Grundlehren der
  mathematischen Wissenschaften [Fundamental Principles of Mathematical
  Sciences], vol. 334, Springer-Verlag, Berlin, 2006.

\bibitem[Wan23]{WANG202370}
{\sc W.~Wang}, \emph{Toric foliated minimal model program}, Journal of Algebra
  \textbf{632} (2023), 70--86.

\end{thebibliography}
\end{document}